\documentclass[11pt]{amsart}
\usepackage{graphicx}
\usepackage[english]{babel}
\usepackage{amsthm}\usepackage[ruled,linesnumbered]{algorithm2e}
\usepackage{float}\usepackage{multirow}
\usepackage{amsmath}\usepackage{cases}
\usepackage{amssymb}
\usepackage{amsfonts}
\usepackage{color}
\usepackage{enumerate}
\usepackage{cite}
\usepackage{hyperref}
\usepackage{stmaryrd}
\usepackage{caption}
\usepackage{subcaption}
\usepackage{mathrsfs}
\usepackage{enumitem}
\usepackage{mathrsfs}

\usepackage{geometry}

\numberwithin{equation}{section}

\allowdisplaybreaks[3]

\newtheorem{thm}{Theorem}[section]
\newtheorem{lemma}[thm]{Lemma}

\theoremstyle{definition}

\theoremstyle{remark}
\newtheorem{remark}[thm]{Remark}

\usepackage{array}

\usepackage{blindtext}

\newcommand{\comment}[1]{}

\usepackage{bm}

\newcommand{\bL}{\bm L}
\newcommand{\bu}{\bm u}

\newcommand{\bv}{\bm v}
\newcommand{\bn}{\bm n}

\newcommand{\bH}{\bm H}

\newcommand{\MJN}[1]{\textcolor{black}{#1}}
\newcommand{\MN}[1]{\textcolor{black}{#1}}
\newcommand{\SG}[1]{\textcolor{black}{#1}}

\title[Extended Lagrange \MN{Finite Element Methods} for Maxwell eigenvalue problem]{An extended Lagrange FEM for the Maxwell eigenvalue problem}

\author[J. Han]{Jiayu Han}
\address{School of  Mathematical Sciences, Guizhou Normal University, Guiyang,  550025 }
\email{hanjiayu@gznu.edu.cn, hanjiayu126@126.com}           
\thanks{This work is partially supported by  the National Natural Science Foundation of
China (under Grants 12361084 and 12001130).}

\begin{document}

\maketitle

\begin{abstract} We construct an extended Lagrange FE space to solve the Maxwell  equation and its eigenvalue problem in $\mathbb R^d$ $(d=2,3)$, which is the sum of the vectorial $p-$order Lagrange FE space ($p\ge1$) and the gradient of the $p+1-$order Lagrange FE space. The two lowest-order methods in 3D adopt slightly less degrees of freedom   than  the second family of the same order edge element methods  in 3D.
     We construct a Cl\'{e}ment interpolant operator to prove the discrete compactness of the FE space and the convergence of  the new methods for both Maxwell equation and its eigenvalue problem.
      For  the
 extended linear Lagrange element method, an average-type curl recovery approach is designed to obtain numerical solution of super-convergence.  In the numerical part, we verify the optimal convergence order for the two lowest-order methods, discuss the upper bound property of numerical eigenvalues and investigate the lower bound property by the average-type curl recovery approach.
\end{abstract}

\parbox{14.5cm}{\noindent\textbf{\small Keywords:~}{\footnotesize Maxwell eigenvalue problem, Lagrange finite element, recovery method, eigenvalue upper/lower  bound.}
}
\thispagestyle{empty}

\section{Introduction}
It is well known that the N\'ed\'elec edge elements \cite{nedelec} can approximate the Maxwell eigenvalue problem well in the canonical discrete form.  We refer the  reader  to  \cite[Section 20]{FE2010}
for an extensive survey.
However, the direct use of Lagrange finite elements for Maxwell eigenvalue problem will produce erroneous solutions on general meshes (see for example \cite{FE2006, FE2010}).
Two basic approaches to obtain stable discrete schemes include introducing the penalization or regularization terms \cite{bonito2011approximation,badia2012nodal, buffa2009solving, duan2019family} and   adopting special meshes
\cite{wong1988combined,powell1977piecewise,worsey1987ann,boffi1,boffi2}. The works by Hu et al. \cite{
hu2021spurious, hu2022partially}  developed finite element methods using partially  discontinuous  Lagrange bases on different meshes.    Wong and Cendes  \cite{wong1988combined} adopted Lagrange element methods to obtain the approximation of Maxwell eigenvalues on Powell-Sabin \cite{powell1977piecewise} meshes in $\mathbb R^2$ or   Worsey-Farin  meshes \cite{worsey1987ann} in $\mathbb R^3$. Recently Boffi et al. \cite{boffi1,boffi2} theoretically justified this numerical fact.

 To enforce the div-free constraint, the Maxwell eigenvalue problem  can be discretized as  mixed forms (c.f. \cite{monk,boffi1,boffi2,CME1999,hiptmair, xiao,gong}).
 However, the canonical mixed form using the Lagrange finite element space is not stable. Our research  aims at proposing an extended Lagrange element method to fill this gap. To do this, we extend the vectorial $p$-order Lagrange element space $\bm L_{h,0}$ with the gradient of its $p+1$-order  space $U_h$ $(p\ge1)$. The extended FE methods  in low order cases have slightly less degrees of freedom than the second family of edge element methods. In order to accomplish the error analysis, we prove the discrete compactness of the extended spaces $\bm L_{h,0}+\nabla U_h$ and the pointwise convergence of the discrete solution operator.
Then  the optimal convergence order of the discrete eigen-pairs is obtained by the Babu\u{s}ka-Osborn theory \cite{babuska}.  

In addition, we propose an average-type  curl recovery scheme for the extended linear Lagrange element method. We can verify  that when the Maxwell eigenfunction is smooth,  the numerical eigenvalue approximates the exact one from upper(see section 5.2). Then this scheme can be used to obtain the supercloseness lower bound of the exact eigenvalues.
For the other work of the numerical lower bound of Maxwell eigenvalues we refer the reader to  \cite{gallistl};  we also refer the reader to \cite{yang,hu,carstensen,you} for  other related early works.

To the best of our knowledge, the proposed methods seem to be the first effective work  using the canonical discrete form and Lagrange element spaces  on general simplicial meshes in both 2D and 3D. It is worth noting that the finite element programs using Lagrange element spaces have been well exploited  on existing softwares. 


We organize this paper as follows. In Section 2,
we introduce the extended Lagrange element spaces, the canonical discrete formulation and its equivalent mixed formulation.
In Section 3, we prove the discrete compactness of the extended Lagrange element space and the pointwisely convergence of the discrete solution operators. Then we derive the optimal convergence order of the discrete eigen-pairs. In Section 4, we design a average type curl recovery scheme for the extended linear Lagrange elements.
In the last section, we   verify numerically the theoretical results using the two lowest order methods in both 2D and 3D.

\section{Extended Lagrange element Discretization}\label{sec-framework}

We consider the
shape regular and affine meshes $\mathcal T_{h} = \{ K\}$ that partition the  simply-connected Lipschitz domain $\Omega \subset \mathbb{R}^d$ ($d=2,3$)  into triangles or tetrahedra in $\mathbb R^d$.
     The extended Lagrange finite element spaces in  this paper is given as
\begin{align}
\bm V_{h  }&=\bm L_{h,0}+\nabla U_{h  }\label{4.4a}
\end{align}
where \begin{align*}
\bm L_h&=\{\bm q_h\in   (H^1(\Omega))^d|\bm q_h|_ K\in (P_ {p}(K))^d,~\forall  K\in\mathcal T_{h}\},\\
\bm L_{h,0}&=\{\bm q_h\in   \bm L_h|\bm q_h\times \bm n=0~\text{on}~\partial\Omega\},\\
U_{h  }&=\{q_h\in  H^1_0(\Omega)|q_h|_ K\in P_ {p+1}(K),~\forall  K\in\mathcal T_{h}\},\quad   p\ge 1.
\end{align*}
The Lagrange FE space $\bm L_{h,0}$ provides the approximation property of piecewise $p$-degree polynomials and
the gradient space $\nabla U_h$ provides the tangential continuity.

Consider the Maxwell eigenvalue problem : Find $(\lambda,\bm u)\in\mathbb R\times\bm H_0(\mathrm{curl},\Omega)\backslash \{0\}$ such that
 \begin{equation} \label{equ:Ceig}
 (\mathrm{curl} \bu, \mathrm{curl} \bv) = \lambda (\bu,\bv), ~~~ \forall \bv \in \bH_0(\mathrm{curl}, \Omega).
 \end{equation}
 We introduce the space
 \begin{equation*}
 \bH_0(\mathrm{curl}, \Omega):=\{\bv \in \bL^2(\Omega): \mathrm{curl} \bv \in \bL^2(\Omega) \text{~and~} \bv \times \bn=0 \text{~on~} \partial \Omega\},
 \end{equation*}
 equipped with the standard norm $\|\cdot\|_{\mathrm{curl}}$.
 Here $\bn$ is an  unit outward normal vector on $\partial \Omega$.

The Maxwell eigenvalue problem can be also written as: Find $(\lambda,\bm u)\in\mathbb R\times\bm H_0(\mathrm{curl},\Omega)\backslash \{0\}$ and ${\rho}\in H^1_0(\Omega)$ such that
\begin{align}\label{2.10a}
\begin{aligned}
(\mathrm{curl}\bm u,\mathrm{curl}\bm{v})+(\nabla {\rho},\bm{v})&=\lambda( \bm u,\bm v),~~\forall \bm {v}\in \bm H_0(\mathrm{curl},\Omega),\\
(\bm u,\nabla q)&=0,~~\forall q\in H_0^1(\Omega).
\end{aligned}
\end{align}

 Accordingly, the extended Lagrange  finite element method with
 \MJN{respect to} the FE space $\bm V_h \subset \bH_0(\mathbf{curl},\Omega)$ is to find
 $\bu_h \in \bm V_h \backslash \{0\}$ and nonzero $\lambda_h \in \mathbb{R}$ such that
 \begin{equation} \label{equ:Deig}
 (\mathrm{curl} \bu_h, \mathrm{curl} \bv_h) = \lambda_h(\bu_h,\bv_h), ~~~ \forall \bv_h \in \bm V_h
 \end{equation}
An equivalent discrete form of the   eigenvalue problem (\ref{equ:Deig}) is given by:
Find $(\lambda_h,\bm u_h,  \rho_h)\in \mathbb{R}\times \bm V_{h  }\times  U_{h  }$
 with $\bm u_h\neq0$ such that
\begin{align}\label{2.9}
\begin{aligned}
(\mathrm{curl}\bm u_h,\mathrm{curl}\bm{v}_h)+(\nabla  {\rho}_h,\bm{v}_{h  })&=\lambda_h( \bm u_{h  },\bm v_{h  }),~~\forall \bm{v}_{h  }\in  \bm V_{h  },\\
(\bm u_{h  },\nabla q)&=0,~~\forall q\in U_{h  }.
\end{aligned}
\end{align}
It is easy to see that $\rho_h=0$.


\section{Error analysis}
First we shall construct a Cl\'ement interpolant operator \cite{clement} adapted to the boundary condition $\bm v\times \bm n=0$ on $\partial\Omega$.
 Let $\Delta_i$ denote the union set of elements sharing the $i$th node and $\widehat{\Delta}_i$ denote its reference element satisfying the linear transformation $\bm x=F_{\Delta_i}(\widehat {\bm x})=B_{\Delta_i}\widehat{\bm x}+b_{\Delta_i}$.
 Define the operator $\widehat{\bm R}_i:L^1(\widehat\Delta_i)\rightarrow \bm P_k(\widehat\Delta_i)$ satisfying
\begin{align}
  \int_{\widehat\Delta_i}(\widehat{\bm R}_i\widehat{\bm v}-\widehat{\bm v})\cdot \widehat{\bm p}=0,\quad \forall \widehat{\bm p}\in\bm P_k(\widehat\Delta_i).
\end{align}
For ${\bm v}\in \bm L^1(\Omega)$ we define $\Pi_h: \bm L^1(\Omega)\rightarrow \bm L_h$ as follows:
\begin{align}
   \Pi_h \bm v=\sum_{i=1}^{N_h}{\bm R}_i{\bm v}(\bm x)\theta_i
\end{align}
where ${ R}_i{\bm v}(\bm x):=\widehat{\bm R}_i\widehat{\bm v}(\widehat{\bm x})$, $N_h=\dim \bm L_h$ and $\theta_i$ is the Lagrange basis corresponding to the $i$th node in the mesh $\mathcal T_h$.

Hereafter we denote the standard Sobolev space $\bm W^{s,q}(D)$ on a bounded Lipschitz domain $D$ equipped with the norm $\|\cdot\|_{s,q,D}$. Let $\bm H^s(D):=\bm W^{s,2}(D)$. 
In this paper, we use the symbol  $x \lesssim y$ ($x \gtrsim y$) to mean $x \le Cy$ ($x \ge Cy$) for a constant $C$ that is independent of the mesh size  and may be different at different occurrences.

Let $\bm v\in\bm H^s(\Delta_i)$ with $0\le m\le s\le p+1$. Pick up any $K\subset\Delta_i$.
By the scaling argument and norm equivalence
\begin{align}\label{3.3}
  |{\bm R}_i{\bm v}-{\bm v}|_{m,K}&\lesssim h_{\Delta_i}^{\frac d 2-m}\inf_{\widehat{\bm q}\in \bm P_k(\widehat K)}|(\widehat{\bm R}_i-I)(\widehat{\bm v}+\widehat{\bm q})|_{m,\widehat \Delta_i}\lesssim h_{\Delta_i}^{\frac d 2-m}|\widehat{\bm v}|_{s,\widehat \Delta_i}\lesssim h_{\Delta_i}^{s-m}|{\bm v}|_{s, \Delta_i}.
\end{align}
Assume that the FE nodes $a_{i_1},\cdots,a_{i_\alpha}$ belong to the element $K$. By inverse estimate we derive
\begin{align}
|{\bm R}_{i_1}{\bm v}-{\bm \Pi}_h{\bm v}|_{m, K}&\le\sum_{j=2}^\alpha\|\theta_{i_j}({\bm R}_{i_1}{\bm v}(a_{i_j})-{\bm R}_{i_j}{\bm v}(a_{i_j}))\|_{m, K}\nonumber\\
  &\lesssim h_{\Delta_i}^{\frac d 2-m}\sum_{j=2}^\alpha\|{\bm R}_{i_1}{\bm v}-{\bm R}_{i_j}{\bm v}\|_{\infty, K}\nonumber\\
  &\lesssim h_{\Delta_i}^{-m}\sum_{j=2}^\alpha\|{\bm R}_{i_1}{\bm v}-{\bm R}_{i_j}{\bm v}\|_{0, K}\nonumber\\
  &\lesssim  h_{\Delta_i}^{s-m}|{\bm v}|_{s, \Delta_i}~by~\eqref{3.3}.
\end{align}
The combination of the above two estimates gives
\begin{align}\label{4.10}
  |{\bm \Pi_h}{\bm v}-{\bm v}|_{m,K}&\le |{\bm R}_{i1}{\bm v}-{\bm v}|_{m, K}+|{\bm R}_{i1}{\bm v}-{\bm \Pi}_h{\bm v}|_{m, K}
  \lesssim h_{\Delta_i}^{s-m}|{\bm v}|_{s, \Delta_i}.
\end{align}

We divide the Lagrange bases into two classes: the interior nodal bases $\{\theta_i\}_{i=1}^{N_{h0}}$ and the boundary nodal bases $\{\theta_i\}_{i=1+N_{h0}}^{N_{h}}$.
For ${\bm v}\in \bm L^1(\Omega)$ we define $\Pi_{h,0}: \bm L^1(\Omega)\rightarrow \bm L_{h,0}$ as follows:
\begin{align}
   \Pi_{h,0} \bm v=\sum_{i=1}^{N_{h0}}{\bm R}_i{\bm v}(a_i)\theta_i+\sum_{i=1+N_{h0}}^{N_{h}}J{\bm v}( a_i)\theta_{i}
\end{align}
where
\begin{subequations}
  \begin{align}
    J\bm{v}(a_i) &= \bigl(\bm{R}_i\bm{v}(a_i) \cdot \bm{n}|_{f_1}(a_i)\bigr)\bm{n}|_{f_1}(a_i), &
    \bm{n}|_{f_1}(a_i) = \bm{n}|_{f_2}(a_i) \text{ and } a_i \in f_1 \cap f_2 \subset \partial \Omega, \label{t11} \\
    J\bm{v}(a_i) &= 0, &
    \bm{n}|_{f_1}(a_i) \neq \bm{n}|_{f_2}(a_i) \text{ and } a_i \in f_1 \cap f_2 \subset \partial \Omega, \label{t2} \\
    J\bm{v}(a_i) &= \bigl(\bm{R}_i\bm{v}(a_i) \cdot \bm{n}|_{f}(a_i)\bigr)\bm{n}|_{f}(a_i), &a_i \in f \subset \partial \Omega \text{ and } a_i \notin \text{any other face}, \label{t3}
  \end{align}
  \end{subequations}

$f$ is an face of $\mathcal T_h$ in $\mathbb R^3$ (or an edge in $\mathbb R^2$), and $f_1$ and $f_2$ are two different faces in $\mathbb R^3$ (or edges in $\mathbb R^2$).
\begin{lemma}For $\bm v\in \bm H^s(\Omega)\cap \bm H_0(\mathrm{curl},\Omega)$ with  $p+1\ge s>1/2$ there holds
\begin{align}
  |{\bm \Pi_{h,0}}{\bm v}-{\bm v}|_{m,K} \lesssim h_{\Delta_i}^{s-m}|{\bm v}|_{s, \Delta_i}~for~0\le m\le s.
\end{align}
\end{lemma}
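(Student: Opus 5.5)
The plan is to compare $\Pi_{h,0}\bm v$ with the unconstrained Cl\'ement interpolant $\Pi_h\bm v$, for which \eqref{4.10} is already available, and to show that the boundary modification costs no more than the same order. Fix $K\subset\Delta_i$. The triangle inequality gives
\begin{align*}
|\Pi_{h,0}\bm v-\bm v|_{m,K}\le |\Pi_h\bm v-\bm v|_{m,K}+|\Pi_{h,0}\bm v-\Pi_h\bm v|_{m,K}.
\end{align*}
The first term is bounded by \eqref{4.10}. The second term vanishes unless $K$ touches $\partial\Omega$. In that case it equals $\sum_{a_j\in K\cap\partial\Omega}(J\bm v(a_j)-\bm R_j\bm v(a_j))\theta_j$. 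By the inverse estimate and the scaling $|\theta_j|_{m,K}\lesssim h_K^{d/2-m}$, it therefore suffices to prove, for each boundary node $a_j$ of $K$,
\begin{align*}
|J\bm v(a_j)-\bm R_j\bm v(a_j)|\lesssim h^{s-d/2}|\bm v|_{s,\Delta_j},
\end{align*}
with $\Delta_j$ contained in a fixed patch around $\Delta_i$ by shape regularity.

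In cases \eqref{t11} and \eqref{t3}, $J\bm v(a_j)-\bm R_j\bm v(a_j)=-\bm n\times(\bm R_j\bm v(a_j)\times\bm n)$, so I only need to control the tangential part $|\bm R_j\bm v(a_j)\times \bm n_f|$ on a flat boundary face $f\ni a_j$. Since $s>1/2$, $\bm v$ has an $L^2$ trace on $f$ and $\bm v\times\bm n=0$ there. The polynomial $\bm w:=\bm R_j\bm v\times\bm n_f$ (with $\bm n_f$ constant) is then a polynomial on $f$. Norm equivalence on the finite-dimensional space of polynomials on $f$, after scaling, gives $|\bm w(a_j)|\lesssim h^{-(d-1)/2}\|\bm w\|_{0,f}=h^{-(d-1)/2}\|(\bm R_j\bm v-\bm v)\times \bm n_f\|_{0,f}$. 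The scaled trace inequality then gives $\|\bm R_j\bm v-\bm v\|_{0,f}\lesssim h^{-1/2}\|\bm R_j\bm v-\bm v\|_{0,K'}+h^{s-1/2}|\bm R_j\bm v-\bm v|_{s,K'}$ for $K'\supset f$. The fractional version of this, valid for $s>1/2$, is obtained on the reference element and scaled. Combined with \eqref{3.3} (with $m=0$ and $m=s$, noting $|\bm R_j\bm v|_{s}$ is controlled since $s\le p+1$ and $k\ge p$), this yields $h^{-(d-1)/2}h^{s-1/2}|\bm v|_{s,\Delta_j}=h^{s-d/2}|\bm v|_{s,\Delta_j}$, as needed.

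Case \eqref{t2} is the main obstacle. Here $J\bm v(a_j)=0$, so I must bound the full vector $|\bm R_j\bm v(a_j)|$. The node lies on two boundary faces $f_1,f_2$ with distinct normals, which in 3D means it lies on an edge or vertex of the polyhedral boundary. The tangential argument above applies on both faces and gives small $|\bm R_j\bm v(a_j)\times \bm n_{f_1}|$ and $|\bm R_j\bm v(a_j)\times \bm n_{f_2}|$. Since $\bm n_{f_1}\neq \bm n_{f_2}$, the two tangential projections together determine the vector: $|\bm z|\le C(\bm n_{f_1},\bm n_{f_2})(|\bm z\times\bm n_{f_1}|+|\bm z\times\bm n_{f_2}|)$. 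The constant depends on the angle between the normals. I would argue that this angle is bounded below by shape regularity and by $\Omega$ being a fixed polyhedron; I would restrict attention to genuine boundary edges and corners, where the angle is a domain constant, and this is the step I would scrutinize most. Summing over the boundedly many boundary nodes of $K$ then gives the claimed estimate for all $0\le m\le s$.
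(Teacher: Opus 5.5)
Your proposal is correct and follows essentially the same route as the paper: you compare with $\Pi_h\bm v$ via the triangle inequality and \eqref{4.10}, then bound each boundary nodal correction by $h^{1/2-m}\|\bm R_j\bm v-\bm v\|_{0,f}$ using $\bm v\times\bm n=0$, a trace inequality valid for $s>1/2$, and the local estimate \eqref{3.3}. Two of your steps make explicit what the paper's one-line chain leaves implicit: recovering the full vector in case \eqref{t2} from the tangential traces on two faces with different normals, with a constant fixed by the polyhedral geometry of $\Omega$, and using boundary faces $f\ni a_j$ that need not lie in $K$, with the bound taken over a slightly enlarged patch rather than $\Delta_i$ alone.
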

\begin{proof}
Let  the element $K$ satisfy  $\overline K\cap\partial \Omega\ne\emptyset$ and there are $\beta$ nodes ${i_1},\cdots,{i_\beta}$ of $K$ on $\partial \Omega$.
For any integer $j\in[1,\beta]$  and $a_{i_j}\in\partial \Omega\cap \partial K$ we have
\begin{align*}
({\bm \Pi}_{h}{\bm v}-{\bm \Pi}_{h,0}{\bm v})(a_{i_j})=
\begin{cases}\sum_{j=1}^\beta \sum_{k=1}^{d-1}{\bm R}_{i_j}{\bm v}( a_{i_j})\cdot  \bm\tau_{k}(a_{i_j})\bm\tau_{k}(a_{i_j}),&\text{if $a_{i_j}$ satisfies \eqref{t11} and \eqref{t3}}\\
 {\bm R}_{i_j}{\bm v}( a_{i_j}) ,&\text{if $a_{i_j}$ satisfies \eqref{t2}}
\end{cases}
\end{align*}
where  $\bm\tau_{k}(a_{i_j})$ is the $k$th   tangent direction  at the node $a_{i_j}$.
Pick up $\sigma>0$ such that $\sigma+1/2\le s$. Note that ${\bm v}\times \bm n|_{\partial \Omega}=0$ then
\begin{align}
 |{\bm \Pi}_{h,0}{\bm v}-{\bm \Pi}_h{\bm v}|_{m, K}&\lesssim \sum_{j=1}^\beta \sum_{k=1}^{d-1}||{\bm \Pi}_{h,0}{\bm v}( a_{i_j})-{\bm \Pi}_h{\bm v}( a_{i_j})|\|\theta_{i_j}\|_{m, K}\nonumber\\
 &\lesssim h_{\Delta_i}^{\frac {1} 2-m}\sum_{j=1}^\beta \|{\bm R}_{i_j}{\bm v}-\bm v\|_{0,\partial K\cap \partial \Omega}\nonumber\\
 &\lesssim h_{\Delta_i}^{\frac {d} 2-m}\sum_{j=1}^\beta\inf_{\bm q\in P_k(\widehat K)}\|(\widehat{\bm R}_{i_j}-I)(\widehat{\bm v}+{\bm q})\|_{1/2+\sigma, \widehat K}\nonumber\\
  &\lesssim h_{\Delta_i}^{s-m}|{\bm v}|_{s, \Delta_i}.
\end{align}
This together with \eqref{4.10} yields
\begin{align}
  |{\bm \Pi_{h,0}}{\bm v}-{\bm v}|_{m,K}&\le |{\bm \Pi}_{h}{\bm v}-{\bm v}|_{m, K}+|{\bm \Pi}_{h,0}{\bm v}-{\bm \Pi}_h{\bm v}|_{m, K} \lesssim h_{\Delta_i}^{s-m}|{\bm v}|_{s, \Delta_i}.
\end{align}

\end{proof}

The Hodge operator is a  useful tool in our error analysis.
It is defined as $ H \bm g\in \bm{H}_0(\mathrm{curl},\Omega)$ and $\varrho\in H^1_0(\Omega)$ for $\bm g\in \bm{H}_0(\mathrm{curl},\Omega)$ such that
\begin{align}\label{2.5s}
\begin{aligned}
    &(\mathrm{curl} H \bm g,\mathrm{curl} \bm v)+  ( \nabla \varrho, \bm v)=(\mathrm{curl}\bm g,\mathrm{curl}\bm v),~~\forall \bm{v}\in \mathbf{H}_0(\mathrm{curl},\Omega),\\
     &( \nabla q, H \bm g)=0,~~\forall q\in H^1_0(\Omega).
\end{aligned}
\end{align}

 We introduce the following   curl-conforming  element spaces \cite{nedelec}:
\begin{align}
\widetilde{\bm V}_{h  } &:= \{\bm v_{h  } \in \bm H_0(\mathrm{curl}, \Omega) |\bm v_{h  }|_ K \in (P_{p}(K))^d, \forall  K\in\mathcal T_{h}\}\supset \bm V_h,\\
\bm X_{h  } &:= \{\bm v_{h  } \in {\bm V}_h  |(\bm v_{h  },\nabla q)=0,  \forall q\in U_{h  }\},\\
\bm X &:= \{\bm v \in  \bm H_0(\mathrm{curl}, \Omega) |(\bm v,\nabla q)=0,  \forall q\in H^1_0(\Omega)\},
\end{align}

Let  $\lambda^{Dir}$ be the minimum singularity exponents for the Dirichlet Laplace operators,  and satisfy
\begin{align}
\left\{\psi\in H^1_0(\Omega):\Delta \psi\in L^2(\Omega),~\right\}\subset \bigcap\limits_{s<\lambda^{Dir}}H^{1+s}(\Omega).
\end{align}

We give the   interpolation error estimates corresponding to the above finite element spaces. 
\begin{lemma}[Theorem 5.41 in \cite{monk}] \label{l4.3}{Let $\bm r_{h  }$ be the  edge element interpolation associated with $\widetilde{\bm V}_{h  }$ and $\bm v,\mathrm{curl}\bm v\in \bm H^s(\Omega)$$(p\ge s>1/2)$ then
 \begin{align}\label{b0}
\|r_{h  } \bm v- \bm v\|_{\mathrm{curl}}\lesssim
 h^{s}(\|\bm v\|_{{s,\Omega}}+ \|\mathrm{curl}\bm v\|_{s,\Omega}).
\end{align}
 Let $\bm v\in \bm X\subset \bm H^{r_D}(\Omega)$ for  $r_D:=\lambda^{Dir}-\varepsilon$ with sufficiently small  $\varepsilon>0$ and $\mathrm{curl}\bm v\in \mathrm{curl}\widetilde{\bm V}_{h  }$ then}
\begin{align}\label{b1}
\|r_{h  } \bm v- \bm v\|_{}\lesssim
 h^{r_D}(\|\bm v\|_{{r_D,\Omega}}+ \|\mathrm{curl}\bm v\|_{}).
\end{align}
    \end{lemma}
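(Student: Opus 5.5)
The first estimate \eqref{b0} is the classical interpolation result, so I would reproduce the standard argument of Monk. The plan is to work elementwise with the pull-back by the covariant Piola map $\widehat{\bm v}=B_K^{T}\bm v\circ F_K$, which satisfies $\widehat{\mathrm{curl}}\,\widehat{\bm v}=\det(B_K)B_K^{-1}\mathrm{curl}\bm v\circ F_K$ (in 3D; in 2D the curl is a scalar and the factor is $\det B_K$). The edge interpolant commutes with this map: $\widehat{r_h\bm v}=\widehat r\,\widehat{\bm v}$.

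The hardest step will be showing that $\widehat r$ is bounded on $\widehat{\bm v}\in\bm H^s(\widehat K)$ with $\widehat{\mathrm{curl}}\,\widehat{\bm v}\in\bm H^s(\widehat K)$ for $s>1/2$. The edge moments $\int_e\widehat{\bm v}\cdot\bm\tau\, q$ are not defined for general $\bm H^{s}$ functions when $s\le 1$, since traces on edges need more regularity. I would follow Monk's Lemma~5.38 and use a Stokes-type identity. Write the edge moment via the face moments of $\widehat{\mathrm{curl}}\,\widehat{\bm v}\cdot\bm n$ together with $L^{p}$-trace estimates. This gives
\[
|\widehat r\,\widehat{\bm v}|\lesssim \|\widehat{\bm v}\|_{s,\widehat K}+\|\widehat{\mathrm{curl}}\,\widehat{\bm v}\|_{s,\widehat K}.
\]
The face and interior moments only need $L^2$ traces on faces, which are available since $s>1/2$.

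Since $\widehat r$ reproduces $(P_{p})^d$, the error $\widehat{\bm v}-\widehat r\widehat{\bm v}$ vanishes on polynomials. The curl error $\widehat{\mathrm{curl}}(\widehat{\bm v}-\widehat r\widehat{\bm v})=(I-\widehat\Pi^{div})\widehat{\mathrm{curl}}\,\widehat{\bm v}$ is controlled by a Raviart--Thomas/BDM interpolant through the commuting diagram. A Bramble--Hilbert argument then gives
\[
\|\widehat{\bm v}-\widehat r\widehat{\bm v}\|_{\widehat K}+\|\widehat{\mathrm{curl}}(\widehat{\bm v}-\widehat r\widehat{\bm v})\|_{\widehat K}\lesssim|\widehat{\bm v}|_{s,\widehat K}+|\widehat{\mathrm{curl}}\,\widehat{\bm v}|_{s,\widehat K}.
\]
Scaling back with $\|B_K\|\lesssim h$, $\|B_K^{-1}\|\lesssim h^{-1}$ and $|\det B_K|\sim h^d$ produces the factor $h^{s}$. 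Summing over $K$ yields \eqref{b0}.

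For \eqref{b1}, let $\bm v\in\bm X\cap\bm H^{r_D}$ with $\mathrm{curl}\bm v\in\mathrm{curl}\widetilde{\bm V}_h$, so $\mathrm{curl}\bm v$ is piecewise polynomial. By the commuting diagram, $\mathrm{curl}\, r_h\bm v=\Pi^{div}_h\mathrm{curl}\bm v=\mathrm{curl}\bm v$, so $\mathrm{curl}(r_h\bm v-\bm v)=0$. On each element, $\mathrm{curl}\bm v|_K$ is a polynomial, hence smooth. The reference bound above therefore holds with $\|\widehat{\mathrm{curl}}\,\widehat{\bm v}\|_{s,\widehat K}$ replaced, via norm equivalence on the finite-dimensional polynomial space, by $\|\widehat{\mathrm{curl}}\,\widehat{\bm v}\|_{0,\widehat K}$. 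This gives
\[
\|\bm v-r_h\bm v\|_{0,K}\lesssim h^{r_D}|\bm v|_{r_D,K}+h\|\mathrm{curl}\bm v\|_{0,K}.
\]
Here $r_D>1/2$ is guaranteed because $\lambda^{Dir}>1/2$ on Lipschitz domains and $\bm X\subset\bm H^{r_D}$. Summing over $K$ and using $h\le h^{r_D}$ (since $r_D\le1$) gives \eqref{b1}.
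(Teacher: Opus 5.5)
The paper gives no argument of its own for this lemma; it only cites Theorem~5.41 of \cite{monk}. Your proposal correctly reconstructs that standard argument: the Stokes-type rewriting of the edge moments as in Monk's Lemma~5.38, the commuting diagram for the curl error, Bramble--Hilbert on $\widehat K$ with covariant Piola scaling, and norm equivalence for the piecewise-polynomial curl in \eqref{b1}. So it is correct and essentially the same route. One detail to keep straight when you write it out: scale the $L^2$ and curl errors separately, bounding $\|\widehat{\mathrm{curl}}(\widehat{\bm v}-\widehat r\,\widehat{\bm v})\|_{\widehat K}$ by $|\widehat{\mathrm{curl}}\,\widehat{\bm v}|_{s,\widehat K}$ alone, because pairing it with the $|\widehat{\bm v}|_{s,\widehat K}$ term of your combined display gives only $h^{s-1}$ after scaling.
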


 \begin{lemma} Let $\bm v_{h  }\in {\bm X}_{h  }$ then $H\bm v_{h  }\in \bm X$ such that $\mathrm{curl}\bm v_{h  }=\mathrm{curl}H\bm v_{h  }$ and
\begin{align}
\|H\bm v_{h  } \|_{r_D,\Omega}&\lesssim\|\mathrm{curl}\bm v_{h  } \|_{},\label{4.15}\\
 \|H\bm v_{h  }-\bm v_{h  }\|_{}&\le  \|r_{h  } H\bm v_{h  }- H\bm v_{h  }\|_{}.
 \label{4.16}%
\end{align}
    \end{lemma}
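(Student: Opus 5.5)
The plan is to follow the standard discrete-compactness argument for edge elements (as in Monk, Lemma 7.6). It rests on two facts: $\bm V_h\subset\widetilde{\bm V}_h$, and the Hodge operator \eqref{2.5s} keeps the curl but projects onto $\bm X$.

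\emph{Step 1: the curl is preserved and $H\bm v_h\in\bm X$.} Given $\bm v_h\in\bm X_h$, set $\bm g=\bm v_h$ in \eqref{2.5s}. The second equation gives $H\bm v_h\in\bm X$ directly. In the first equation, take $\bm v=\nabla q$ with $q\in H^1_0(\Omega)$. This gives $(\nabla\varrho,\nabla q)=0$ for all such $q$, so $\varrho=0$. Hence $(\mathrm{curl}(H\bm v_h-\bm v_h),\mathrm{curl}\bm v)=0$ for all $\bm v\in\bm H_0(\mathrm{curl},\Omega)$. Since $\Omega$ is simply connected, $H\bm v_h-\bm v_h=\nabla\phi+\bm w$ with $\phi\in H^1_0(\Omega)$ and $\mathrm{curl}\,\bm w$-orthogonal to itself. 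Choosing $\bm v=H\bm v_h-\bm v_h$ gives $\|\mathrm{curl}(H\bm v_h-\bm v_h)\|=0$, so $\mathrm{curl}\,H\bm v_h=\mathrm{curl}\,\bm v_h$.

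\emph{Step 2: the regularity bound \eqref{4.15}.} I will invoke the standard embedding for the simply connected Lipschitz domain: every field in $\bm X$ with curl in $\bm L^2$ belongs to $\bm H^{r_D}(\Omega)$, with $r_D=\lambda^{Dir}-\varepsilon$. The Friedrichs inequality on $\bm X$ controls the $\bm L^2$ part by the curl. Together these give $\|H\bm v_h\|_{r_D,\Omega}\lesssim\|\mathrm{curl}\,H\bm v_h\|=\|\mathrm{curl}\,\bm v_h\|$. I take this embedding as known (Amrouche et al.; Monk, Ch. 3–4), since the paper defines $r_D$ exactly for this purpose. The proof would cite it rather than reprove it.

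\emph{Step 3: the orthogonality estimate \eqref{4.16}.} First, $\mathrm{curl}\,H\bm v_h=\mathrm{curl}\,\bm v_h\in\mathrm{curl}\,\widetilde{\bm V}_h$, so Lemma \ref{l4.3} applies and $r_hH\bm v_h$ is well defined. By the commuting diagram, $\mathrm{curl}\,r_hH\bm v_h=\mathrm{curl}\,\bm v_h$, so $r_hH\bm v_h-\bm v_h\in\widetilde{\bm V}_h$ is curl-free. By the discrete exact sequence, curl-free elements of $\widetilde{\bm V}_h$ are gradients of continuous piecewise $P_{p+1}$ functions vanishing on $\partial\Omega$, that is, of elements of $U_h$. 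Hence $r_hH\bm v_h-\bm v_h=\nabla q_h$ with $q_h\in U_h$.

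Next, $H\bm v_h\perp\nabla U_h$ because $H\bm v_h\in\bm X$, and $\bm v_h\perp\nabla U_h$ because $\bm v_h\in\bm X_h$. So $H\bm v_h-\bm v_h\perp\nabla q_h$, which gives
\[
\|H\bm v_h-\bm v_h\|^2=(H\bm v_h-\bm v_h,\,H\bm v_h-r_hH\bm v_h)\le\|H\bm v_h-\bm v_h\|\,\|H\bm v_h-r_hH\bm v_h\|.
\]
Dividing yields \eqref{4.16}.

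The main obstacle is Step 3: it needs the exact-sequence identification of curl-free elements of $\widetilde{\bm V}_h$ with $\nabla U_h$ on the simply connected domain, and the degree match between $\widetilde{\bm V}_h$ (full $P_p$ Nédélec, second family) and $U_h$ ($P_{p+1}$). That degree match is exactly why $U_h$ was chosen of degree $p+1$.
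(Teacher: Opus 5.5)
Your proof is correct and follows the same route as the paper's, which is Monk's Lemma 7.6: $\varrho=0$ gives curl preservation, (4.15) comes from the cited embedding $\bm X\subset\bm H^{r_D}(\Omega)$, the commuting property gives $r_hH\bm v_h-\bm v_h\in\nabla U_h$, and orthogonality of $H\bm v_h-\bm v_h$ to $\nabla U_h$ finishes the argument. Your Cauchy--Schwarz step yields \eqref{4.16} directly with constant one, which is cleaner than the paper's detour through Lemma~\ref{l4.3} and \eqref{4.15} (that detour really produces the $h^{r_D}\|\mathrm{curl}\,\bm v_h\|$ bound used later); the decomposition sentence in your Step 1 is garbled and can simply be deleted, since testing with $\bm v=H\bm v_h-\bm v_h$ already suffices.
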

    \begin{proof}The proof follows Lemma 7.6 in \cite{monk} or  Lemma 4.5 in \cite{hiptmair}. The regularity estimate \eqref{4.15} is due to $\bm X\subset  H^{r_D}(\Omega)$(c.f. Remark 3.8 in \cite{amrouche}). According to the definition of $ H$, we have   $\mathrm{curl}  H \bm v_{h  }=\mathrm{curl}\bm v_{h  }$.
By the commuting property of $r_h$ we have $\mathrm{curl}(r_{h  } H \bm v_{h  })=\mathrm{curl} \bm v_{h  }$.
 This implies  $r_{h  } H \bm v_{h  }- \bm v_{h  }\in \nabla U_{h  }$ (see Section 7.2.1 in \cite{monk}).
According to  $( H \bm v_{h  }- \bm v_{h  },\bm q)=0,~\forall \bm q\in \nabla U_{h  }$, we have $( H \bm v_{h  }- \bm v_{h  },r_{h  } H \bm v_{h  }- \bm v_{h  })=0$.
Hence using Lemma 3.2 we deduce
\begin{align*}
( H \bm v_{h  }- \bm v_{h  }, H \bm v_{h  }- \bm v_{h  })&=( H \bm v_{h  }- r_{h  }H\bm v_{h  }, H \bm v_{h  }- \bm v_{h  })\\
&\lesssim
 \| H \bm v_{h  }- \bm v_{h  }\|_{}  (h^{r_D} \| H\bm v_{h  }\|_{r_0,\Omega}+h\|\mathrm{curl}{\bm v}_{h  }\|_{}).
\end{align*}
Then the estimate \eqref{4.16} is obtained by the above estimate and \eqref{4.15}.
\end{proof}

To analyze the convergence of the discretization \eqref{2.9},
we consider the   source problem with div-free constraint: Find $T\bm f\in\bm H_0(\mathrm{curl},\Omega)$ and $S\bm f\in H^1_0(\Omega)$ such that
\begin{align}\label{p2}
\begin{aligned}
(\mathrm{curl}T\bm f,\mathrm{curl}\bm{v})+( \nabla S\bm f, \bm v)&=( \bm f,\bm v),~~\forall \bm{v}\in \bm H_0(\mathrm{curl},\Omega),\\
( \nabla q, T\bm f)&=0,~~\forall q\in H^1_0(\Omega).
\end{aligned}
\end{align}
Its extended Lagrange FE discretization is to seek $T_{h  }\bm f\in \bm V_{h  }$ and $S_h\bm f\in U_{h  }$:
\begin{align}\label{p3}
\begin{aligned}
(\mathrm{curl}T_{h  }\bm f,\mathrm{curl}\bm{v}_{h  })+( \nabla S_h\bm f, \bm v_{h  })&=( \bm f,\bm v_{h  }),~~\forall \bm{v}_{h  }\in \bm V_{h  },\\
(  \nabla q, \bm T_{h  }\bm f)&=0,~~\forall q\in U_{h  }.
\end{aligned}
\end{align}
It is easy to verify that $S\bm f=S_h \bm f=0$ for any $\bm f \in \bm X$. Let $\bm f=0$ then $T_h\bm f\in \nabla U_h$ and so $T_h\bm f=0$. Therefore the problem \eqref{p3} is uniquely solvable.

Let  $\lambda^{Neu}\in(1/2,1]$ be the minimum singularity exponents for the   Neumann Laplace operators,  and satisfy
\begin{align}
\left\{\psi\in H^1(\Omega):\Delta \psi\in L^2(\Omega),~\frac{\partial \psi}{\partial n}=0~on~\partial\Omega,\int_\Omega\psi dx=0\right\}\subset \bigcap\limits_{s<\lambda^{Neu}}H^{1+s}(\Omega).
\end{align}
\begin{lemma}[Corollary 6.4  in \cite{costabel}]\label{ll1} There holds the following decomposition for $T\bm f$ in the equation \eqref{p2}:

\begin{align}
T\bm f=\bm w_0+\nabla \phi\text{ with $\bm w_0\in\bm H^{1+\lambda^{Neu}-\varepsilon_1}(\Omega)$ and $\phi\in H^1_0(\Omega)\cap H^{1+\lambda^{Dir}-\varepsilon_2}(\Omega)$}\label{4.17}%
\end{align}
where $\varepsilon_1,\varepsilon_2>0$   arbitrarily approach to 0.
\end{lemma}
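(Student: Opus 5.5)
We need to prove Lemma (Corollary 6.4 in Costabel): decomposition of $T\bm f$ into a regular part $\bm w_0\in\bm H^{1+\lambda^{Neu}-\varepsilon_1}$ plus a gradient $\nabla\phi$ with $\phi\in H^1_0\cap H^{1+\lambda^{Dir}-\varepsilon_2}$. The plan is to use the regular decomposition of $\bm H_0(\mathrm{curl})$ fields combined with the regularity of $\mathrm{curl}\, T\bm f$, then correct the divergence by a Dirichlet problem.

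First identify what $T\bm f$ satisfies. Testing \eqref{p2} with $\bm v=\nabla q$, $q\in H^1_0(\Omega)$, gives $(\nabla S\bm f,\nabla q)=(\bm f,\nabla q)$. So $S\bm f$ solves a Dirichlet problem. The field $\bm z:=\mathrm{curl}\,T\bm f$ then satisfies $\mathrm{curl}\,\bm z=\bm f-\nabla S\bm f\in\bm L^2$ in the distributional sense, together with $\mathrm{div}\,\bm z=0$. Moreover $\bm z\cdot\bm n=0$ on $\partial\Omega$, because $T\bm f\times\bm n=0$; in 2D $\bm z$ is scalar with natural boundary condition. So $\bm z\in\bm H(\mathrm{curl})\cap\bm H_0(\mathrm{div})$ with zero divergence. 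By the Neumann-type embedding, i.e.\ writing $\bm z$ via a vector potential whose components solve Neumann Laplace problems (Costabel/Amrouche), we get $\bm z\in\bm H^{\lambda^{Neu}-\varepsilon_1}(\Omega)$.

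Next construct $\bm w_0$. Take $\bm w_0\in\bm H^1$ with $\mathrm{curl}\,\bm w_0=\bm z$ and zero trace. This can be done by extending $\bm z$ by zero to a large ball (legitimate since $\bm z\cdot\bm n=0$ and $\mathrm{div}\,\bm z=0$), then applying a Bogovskii-type or Newton-potential right inverse of curl that gains one derivative. Because that inverse maps $\bm H^{t}\to\bm H^{1+t}$ for $0\le t<1/2$, and $\lambda^{Neu}-\varepsilon_1$ may exceed $1/2$, the cleaner route is to solve $\Delta\bm w_0=-\mathrm{curl}\,\bm z$ componentwise with the Neumann-type regularity, obtaining $\bm w_0\in\bm H^{1+\lambda^{Neu}-\varepsilon_1}$.

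Then $T\bm f-\bm w_0$ is curl-free in $\bm H_0(\mathrm{curl})$ on a simply connected domain. If $\bm w_0$ has vanishing tangential trace, it equals $\nabla\phi$ with $\phi\in H^1_0(\Omega)$. The divergence constraint in \eqref{p2} gives $\Delta\phi=-\mathrm{div}\,\bm w_0\in H^{\lambda^{Neu}-1-\varepsilon_1}\supset L^2$ regularity data, and the Dirichlet shift theorem yields $\phi\in H^{1+\lambda^{Dir}-\varepsilon_2}$.

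The main obstacle is arranging $\bm w_0\times\bm n=0$ while keeping the full $1+\lambda^{Neu}$ smoothness. I would handle it by absorbing the tangential trace of $\bm w_0$ into a gradient of an $H^1$ function that is constant on $\partial\Omega$, equivalently subtracting a harmonic correction. If this fails, I would simply cite Costabel–Dauge's Corollary 6.4, as the lemma itself does.
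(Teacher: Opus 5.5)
The paper gives no proof of its own here. The lemma is imported from Corollary~6.4 of \cite{costabel}, which is exactly your fallback.

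Your attempted self-contained derivation, however, has a genuine gap at the one step that carries the content: the construction of $\bm w_0$. The identification of $S\bm f$ and of $\bm z=\mathrm{curl}\,T\bm f$ is fine, and so is the final Dirichlet-shift step; the regularity of $\bm z$ is itself a citable polyhedral result.

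The difficulty is this. $T\bm f$ itself has curl $\bm z$, zero divergence and zero tangential trace, and these properties determine it. On a nonconvex polyhedron $T\bm f$ is in general not even in $\bm H^1(\Omega)$. So the $\bm w_0$ you need must have nonzero divergence, and producing it amounts to peeling the Dirichlet singularities off $T\bm f$. Solving $\Delta\bm w_0=-\mathrm{curl}\,\bm z$ componentwise does not do this. It only gives $\mathrm{curl}(\mathrm{curl}\,\bm w_0-\bm z)=\nabla\mathrm{div}\,\bm w_0$, not $\mathrm{curl}\,\bm w_0=\bm z$. Componentwise Neumann-type conditions also say nothing about $\bm w_0\times\bm n$, whose vanishing couples the components across faces at edges and vertices.

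Your proposed repair cannot work either. If $\psi$ is constant on $\partial\Omega$, then $\nabla\psi\times\bm n=0$, so subtracting $\nabla\psi$ leaves the tangential trace untouched.

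Here is what a working correction would look like. Let $\bm w\in\bm H^{1+s}(\Omega)$ be any field with $\mathrm{curl}\,\bm w=\bm z$. Such a field exists for every $s$ without zero extension, e.g.\ via the regularized Poincar\'e operators of Costabel--McIntosh, so the $t<1/2$ barrier you hit is not the real obstruction. Write $T\bm f-\bm w=\nabla\chi$. The correction must then be $\nabla G$ with $G|_{\partial\Omega}=\chi|_{\partial\Omega}=:g$, i.e.\ $\nabla_\Gamma g=-\bm w_T$. A harmonic $G$, as your ``harmonic correction'' suggests, carries Dirichlet edge and corner singularities and destroys the $\bm H^{1+s}$ regularity of $\bm w+\nabla G$. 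Instead you would have to lift $g$ to some $G\in H^{2+s}(\Omega)$, which requires checking edge and vertex compatibility conditions for traces on a polyhedron. Then set $\bm w_0=\bm w+\nabla G$ and $\phi=\chi-G\in H^1_0(\Omega)$, with $\Delta\phi=-\mathrm{div}\,\bm w_0\in L^2(\Omega)$.

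That polyhedral trace analysis, or the singular-function analysis of \cite{costabel}, is the missing ingredient.

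There is also a small slip in your last step. The datum $\mathrm{div}\,\bm w_0$ lies in $H^{\lambda^{Neu}-\varepsilon_1}(\Omega)\subset L^2(\Omega)$, not in $H^{\lambda^{Neu}-1-\varepsilon_1}(\Omega)$. With data of negative order, the Dirichlet shift to $H^{1+\lambda^{Dir}-\varepsilon_2}$ would not be available.
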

Since $\mathrm{curl}^2T\bm f=- \nabla S\bm f +\bm f$, by the above lemma we have   $T\bm f, \mathrm{curl}T\bm f\in \bm H^{r}(\Omega)$ for  some $r\ge r_0$ with $r_0$ given by the following theorem.
\begin{thm}\label{t1}
Let  $\bm f\in \bm L^2(\Omega)$, $T\bm f\in \bm H^{r}(\Omega)$ and $\bm w_0\in \bm H^{r+1}(\Omega)$ for $p\ge r\ge r_0:=\min(\lambda^{Dir},\lambda^{Neu})-\varepsilon$ with sufficiently small  $\varepsilon>0$ then
\begin{align}\label{2.1}
\|(  T  -  T_h) \bm f\|_{\mathrm{curl}}&    \lesssim h^r+\inf\limits_{v_h\in  U_h}|S \bm f- v_h|_{1,\Omega};
 \end{align}
 further assume $T  \bm f\in  \bm H^{s}(\Omega)$  for $r\le s\le r_0+r\le r_0+p$ and $\mathrm{div} \bm f=0$ then
 \begin{align}
\|T\bm f-T_h\bm f\| &\lesssim h^{s}.\label{2.1a}
 \end{align}

\end{thm}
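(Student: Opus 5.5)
We argue as for a mixed saddle-point problem: a Galerkin/Céa-type bound for the energy error, then a duality (Aubin--Nitsche) argument for the $L^2$ bound. Write $\bm u=T\bm f$, $\sigma=S\bm f$, $\bm u_h=T_h\bm f$, $\sigma_h=S_h\bm f$.

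\emph{Energy estimate.} Subtracting \eqref{p3} from \eqref{p2} tested with $\bm v_h\in\bm V_h$ gives the error equations
\[
(\mathrm{curl}(\bm u-\bm u_h),\mathrm{curl}\bm v_h)+(\nabla(\sigma-\sigma_h),\bm v_h)=0,\qquad (\nabla q,\bm u-\bm u_h)=0,\quad q\in U_h .
\]
Since $\nabla U_h\subset\bm V_h$, the inf-sup condition for $b(\bm v_h,q)=(\bm v_h,\nabla q)$ is trivial: take $\bm v_h=\nabla q$. On $\bm X_h$, coercivity of $\|\mathrm{curl}\cdot\|$ in the full $\|\cdot\|_{\mathrm{curl}}$ norm is the discrete Friedrichs inequality. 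We derive it from \eqref{4.15}--\eqref{4.16} and Lemma~\ref{l4.3}: $\|\bm v_h\|\le\|H\bm v_h\|+\|H\bm v_h-r_hH\bm v_h\|\lesssim\|\mathrm{curl}\bm v_h\|$. Standard saddle-point theory then yields quasi-optimality:
\[
\|\bm u-\bm u_h\|_{\mathrm{curl}}\lesssim\inf_{\bm w_h\in\bm V_h}\|\bm u-\bm w_h\|_{\mathrm{curl}}+\inf_{v_h\in U_h}|\sigma-v_h|_{1}.
\]
The infimum is the only place the extended space enters. Use Lemma~\ref{ll1} to split $\bm u=\bm w_0+\nabla\phi$ and choose $\bm w_h=\Pi_{h,0}\bm w_0+\nabla\phi_h$, where $\phi_h\in U_h$ is the Lagrange (or Scott--Zhang) interpolant of $\phi$.
\begin{itemize}
\item The gradient part is exact under curl, and $\|\nabla(\phi-\phi_h)\|\lesssim h^{r}\|\phi\|_{1+r}$ as long as $1+r\le 1+\lambda^{Dir}-\varepsilon_2$. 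For $r$ beyond $r_0$, use the assumed regularity of $T\bm f$ and $\bm w_0$ instead, which makes $\phi$ as smooth as needed.
\item For $\bm w_0$ we need $\bm w_0\times\bm n=0$ on $\partial\Omega$. This holds because $\bm u\times\bm n=0$ and $\nabla\phi\times\bm n=0$ for $\phi\in H_0^1$.
\item The lemma on $\Pi_{h,0}$ with $m=0,1$ and $s=r+1\le p+1$ then gives $\|\bm w_0-\Pi_{h,0}\bm w_0\|_{1}\lesssim h^{r}|\bm w_0|_{r+1}$, which controls the curl error.
\end{itemize}
Summing over elements gives \eqref{2.1}.

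\emph{$L^2$ estimate.} If $\mathrm{div}\bm f=0$, then $\sigma=0$; also $\sigma_h=0$, since testing with $\bm v_h=\nabla\sigma_h$ gives $\|\nabla\sigma_h\|^2=(\bm f,\nabla\sigma_h)=0$. The energy estimate then reduces to $h^{r}$. Split $\bm e=\bm u-\bm u_h$.
\begin{itemize}
\item Let $\bm e_h=P_h\bm u-\bm u_h\in\bm X_h$ with $P_h\bm u$ a discretely divergence-free best approximation. Compare $\bm e_h$ with $H\bm e_h$ using \eqref{4.16} and Lemma~\ref{l4.3}: $\|\bm e_h-H\bm e_h\|\lesssim h^{r_D}\|\mathrm{curl}\bm e_h\|\lesssim h^{r_0+r}$.
\item For the continuous divergence-free part, solve the dual problem $\bm z=T(H\bm e_h)$, which has $\bm z,\mathrm{curl}\bm z\in\bm H^{r_0}$ by Lemma~\ref{ll1}. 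Use Galerkin orthogonality:
\[
\|H\bm e_h\|^2\approx(\mathrm{curl}\,\bm e,\mathrm{curl}(\bm z-\bm z_h))\lesssim h^{r}\,h^{r_0}\|H\bm e_h\|.
\]
\end{itemize}
Combining with the approximation error $\|\bm u-P_h\bm u\|\lesssim h^{s}$, obtained from $\Pi_{h,0}$ with $m=0$ and $s\le p+1$, gives $\|\bm e\|\lesssim h^{\min(s,r+r_0)}=h^{s}$, which is \eqref{2.1a}.

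\emph{Main obstacle.} The hardest step is the discrete Friedrichs/Hodge comparison, Lemma~3.3. That lemma relies on $r_hH\bm v_h-\bm v_h\in\nabla U_h$, which uses $\mathrm{curl}$-exactness. This holds for $\widetilde{\bm V}_h$ but must be checked for $\bm V_h\subsetneq\widetilde{\bm V}_h$: the kernel of curl in $\widetilde{\bm V}_h$ is the gradient of the $(p+1)$-order Lagrange space, i.e.\ exactly $\nabla U_h\subset\bm V_h$. So we first project via $r_h$ into $\widetilde{\bm V}_h$ and then argue that the orthogonality $(H\bm v_h-\bm v_h,\nabla U_h)=0$ suffices. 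A secondary delicacy is matching the regularity indices when $r>r_0$, where we rely on the hypothesis $\bm w_0\in\bm H^{r+1}$.
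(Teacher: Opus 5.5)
Your proof is correct in substance and has the same skeleton as the paper's: a C\'ea-type bound, then a duality argument in which a discretely divergence-free error is compared with its Hodge image through \eqref{4.16}. The difference is that you keep everything inside $\bm V_h$, whereas the paper routes both halves through the edge-element space $\widetilde{\bm V}_h$.

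For \eqref{2.1}, the paper proves a C\'ea bound only for the curl seminorm, with $\bm v_h=\Pi_{h,0}\bm w_0$. It then recovers the $L^2$ part from the $\bm H(\mathrm{curl})$-projection $P_h$ onto $\widetilde{\bm V}_h$, a discrete Poincar\'e inequality and \eqref{b0}. You instead get full-norm quasi-optimality from Brezzi theory with the approximant $\Pi_{h,0}\bm w_0+\nabla\phi_h$. You also actually derive the discrete Friedrichs inequality on $\bm X_h$ from \eqref{4.15}--\eqref{4.16}, which the paper merely invokes. Brezzi's estimate also produces the $S$-infimum directly, while the paper's bound in $|(S-S_h)\bm f|_{1}$ tacitly uses that $S_h\bm f$ is the Ritz projection of $S\bm f$.

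For \eqref{2.1a}, the paper uses the edge interpolant $r_hT\bm f$, whose $L^2$ rate $h^s$ is a ready-made result, and splits $r_hT\bm f-T_h\bm f=\bm w_0^c+\nabla q_h$ in $\widetilde{\bm V}_h$. This forces it to apply \eqref{4.16} to $\bm w_0^c$, which lies in $\widetilde{\bm V}_h$ but in general not in $\bm X_h$. That is harmless, since the proof of Lemma 3.3 only uses orthogonality to $\nabla U_h$ and $\ker\mathrm{curl}\cap\widetilde{\bm V}_h=\nabla U_h$. You apply the lemma verbatim to $\bm e_h\in\bm X_h$, at the price of building your own approximant in $\bm X_h$.

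That construction is the step you leave vague, and it matters. You need $P_h\bm u\in\bm X_h$ with both $\|\bm u-P_h\bm u\|\lesssim h^s$ and $\|\mathrm{curl}(\bm u-P_h\bm u)\|\lesssim h^r$; the latter is what gives $\|\mathrm{curl}\,\bm e_h\|\lesssim h^r$. Applying $\Pi_{h,0}$ to $\bm u$ itself does not work: $\bm u\notin\bm H^{r+1}$ in general, so the lemma on $\Pi_{h,0}$ only gives a curl error of $O(h^{s-1})$, and $s\le r+r_0<r+1$. The fix is as follows.
\begin{itemize}
\item Take $\bm w_h=\Pi_{h,0}\bm w_0+\nabla\phi_h$, noting that $\phi\in H^{1+s}$ because $\nabla\phi=\bm u-\bm w_0\in\bm H^{s}$.
\item Set $P_h\bm u=\bm w_h-\nabla q_h$, where $\nabla q_h$ is the $L^2$-projection of $\bm w_h$ onto $\nabla U_h$.
\item Since $\bm u\perp\nabla U_h$, you get $\|\nabla q_h\|\le\|\bm u-\bm w_h\|$, and the curl is unchanged.
\end{itemize}

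Also replace your ``$\approx$'' by an exact identity. Use the dual equation $(\mathrm{curl}\,\bm z,\mathrm{curl}\,\bm v)=(H\bm e_h,\bm v)$ for all $\bm v\in\bm H_0(\mathrm{curl},\Omega)$, which holds because $S(H\bm e_h)=0$. Combined with $\mathrm{curl}\,H\bm e_h=\mathrm{curl}\,\bm e_h$ and Galerkin orthogonality, it gives
\[
\|H\bm e_h\|^2=(H\bm e_h,\bm e_h)=(H\bm e_h,P_h\bm u-\bm u)+(\mathrm{curl}(\bm z-\bm z_h),\mathrm{curl}\,\bm e),\qquad \bm z_h\in\bm V_h .
\]
If you instead pass from $(\mathrm{curl}\,\bm z,\mathrm{curl}\,\bm e_h)$ to $(\mathrm{curl}\,\bm z,\mathrm{curl}\,\bm e)$ and bound the remainder $(\mathrm{curl}\,\bm z,\mathrm{curl}(P_h\bm u-\bm u))$ by Cauchy--Schwarz, you only get $h^r$. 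With these two points made precise, your argument gives $\|\bm e\|\lesssim h^{s}+h^{r+r_0}$, as claimed.
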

\begin{proof}
The combination of \eqref{p2} and \eqref{p3} leads to
\begin{align*}
 (\mathrm{curl}(T_{  }-T_{h  })\bm f,\mathrm{curl}\bm{v}_{h  })+( \nabla (S-S_h)\bm f, \bm v_{h  })&=0,\quad\forall \bm{v}_{h  }\in \bm V_{h  }.
\end{align*}
It follows that
\begin{align*}
 (\mathrm{curl}(\bm v_h-T_{h  }\bm f),\mathrm{curl}(\bm v_h-T_{h  }\bm f))&= (\mathrm{curl}(\bm v_h-T\bm f),\mathrm{curl}(\bm v_h-T_{h  }\bm f))\\
 &\quad-( \nabla (S-S_h)\bm f, \bm v_h-T_{h  }\bm f),\quad\forall \bm{v}_{h  }\in \bm X_{h  }.
\end{align*}
Hence  we have from the Poinc\'are inequality in $\mathbf X_h$ and $\bm V_h=\bm X_h\bigoplus \nabla U_h$
\begin{align*}
\|\mathrm{curl}(\bm v_h-T_{h  }\bm f)\|_{}\lesssim \|\mathrm{curl}(T\bm f-\bm v_h)\|_{}+|(S-S_h)\bm f|_{1,\Omega},\quad\forall \bm{v}_{h  }\in \bm V_{h  }.
\end{align*}
Taking $\bm v_h$ as $\mathbf\Pi_{h,0}\bm w_0$, this together with the triangular inequality infers that
\begin{align*}
\|\mathrm{curl}(T_{  }\bm f-T_{h  }\bm f)\|_{}\lesssim \|\mathrm{curl}(\bm w_0-\mathbf\Pi_{h,0}\bm w_0)\|_{}+|(S-S_h)\bm f|_{1,\Omega}
\lesssim h^{r}+|(S-S_h)\bm f|_{1,\Omega}.
\end{align*}
Let $P_{h  }$ be the orthogonal projection onto $\widetilde{\bm V}_{h  }$ such that for any $\bm v\in\bm H_0(\mathrm{curl},\Omega)$
\begin{align}
(\bm v-P_{h  }\bm v,\bm q)+(\mathrm{curl}(\bm v-P_{h  }\bm v),\mathrm{curl}\bm q)=0,~\forall\bm q\in \widetilde{\bm V}_{h  }.
\end{align}
By the triangular inequality and  the   discrete Poinc\'{a}re inequality we get
\begin{align*}
\|T_{  }\bm f-T_{h  }\bm f\|_{}&\lesssim \|T_{  }\bm f-P_hT\bm f\|_{}+  \|\mathrm{curl}(P_hT\bm f-T_h\bm f)\|_{}\\
&\lesssim \|T_{  }\bm f-P_hT\bm f\|_{\mathrm{curl}}+  \|\mathrm{curl}(T\bm f-T_h\bm f)\|_{}.
\end{align*}
This together with the above estimate and   \eqref{b0}  leads to the conclusion \eqref{2.1}. 
We have the following Helmholtz decomposition
$r_hT\bm f- T_h\bm f=\bm w_0^c +\nabla q_h$ with $\bm w_0^c\in \widetilde{\bm V}_h,(\bm w_0^c,\nabla q)=0,  \forall q\in U_{h  }$ and $q_h\in U_h$.
For $\bm f\in\bm L^2(\Omega)$ with $\mathrm{div}\bm f=0$,  we start as follows
\begin{align}\label{2.41s}
\|T\bm f-T_h\bm f\|^2&=\left(T\bm f-T_h\bm f,  T\bm f-r_hT\bm f+r_hT\bm f-\bm T_h\bm f\right)\nonumber\\
&\lesssim \|T\bm f-r_hT\bm f\|\|T\bm f-T_h\bm f\|+{|(T\bm f-T_h\bm f,\bm w^c_0)|}.
\end{align}
Note that $S_hH\bm w^c_0=SH\bm w^c_0=0$. Using Lemmas 3.2-3.4, the second term at the right-hand side is estimated as follows:
\begin{align}\label{2.45}
|(T\bm f-T_h\bm f,\bm w^c_0)|&\lesssim|(T\bm f-T_h\bm f,\bm w^c_0- H\bm w^c_0)|+|(T\bm f-T_h\bm f, H\bm w^c_0)|\nonumber\\
&\lesssim h^{r_0}\|\mathrm{curl}\bm w^c_0\|\|T\bm f-T_h\bm f\|+|(T\bm f-T_h\bm f, H\bm w^c_0)|\text{ by \eqref{4.16}, \eqref{b1} and \eqref{4.15}}
\end{align}
where
\begin{align}\label{2.48}
&|(T\bm f-T_h\bm f, H\bm w^c_0)|=(\mathrm{curl}(T\bm f-T_h\bm f),\mathrm{curl}(T H\bm w^c_0-\mathbf \Pi_{h,0}T H\bm w^c_0))\nonumber\\
&\quad\lesssim h^{r_0}\|H\bm w^c_0\|\|\mathrm{curl}(T\bm f-T_h\bm f)\|\nonumber\text{ by Lem. 3.1 and \eqref{4.17}}\\
&\quad\lesssim h^{r_0}(h^{r_D}\|\mathrm{curl}\bm w^c_0\|+\|\bm w^c_0\|)\|\mathrm{curl}(T\bm f-T_h\bm f)\|\nonumber\text{ by \eqref{4.16}}\\
&\quad\lesssim (h^{r_0}\|\mathrm{curl}(r_hT\bm f-T_h\bm f)\|_{}+\|r_hT\bm f-T_h\bm f\|_{})\|\mathrm{curl}(T\bm f-T_h\bm f)\|h^{r_0}.
\end{align}
The substitution of (\ref{2.48}) and (\ref{2.45}) into \eqref{2.41s} gives
\begin{align}\label{2.48a}
\|T\bm f-T_h\bm f\| &\lesssim h^{r_0}\|\mathrm{curl}(T\bm f-T_h\bm f)\|+h^{r_0}\|\mathrm{curl}(r_hT\bm f-T\bm f+T\bm f-T_h\bm f)\|_{}+\|r_hT\bm f-T\bm f\|_{}\nonumber\\
&\lesssim h^{r_0+r}+h^{s}~\text{by \eqref{2.1}  and Lemma 3.1},
\end{align}
where we have used $\|r_hT\bm f-T\bm f\|_{}\lesssim h^s\|T\bm f\|_{s,\Omega}$ (see Theorem 8.15 in \cite{monk}).
This completes the proof.
\end{proof}

Let ${\mathcal{H}}$ be a sequence of the mesh sizes converging to 0.
\begin{lemma}(Discrete compactness property) Any sequence $\{\bm v_{h}\}_{h\in\mathcal{H}}$ with $\bm v_{h}\in \bm X_{h  }$ that is uniformly bounded w.r.t $\|\cdot\|_{\mathrm{curl}}$ contains a subsequence that converges strongly in $\bm L^2(\Omega)$.
\end{lemma}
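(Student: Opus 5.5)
The plan is the standard Hodge-operator argument (as in Monk, Section 7.3, or Hiptmair), using Lemma 3.4 in place of the usual edge-element estimate. Let $\{\bm v_h\}_{h\in\mathcal H}$ with $\bm v_h\in\bm X_h$ and $\|\bm v_h\|_{\mathrm{curl}}\le C$.

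\textbf{Step 1: pass to the continuous problem.} For each $h$, set $\bm w_h:=H\bm v_h$. By Lemma 3.4, $\bm w_h\in\bm X$ and $\mathrm{curl}\,\bm w_h=\mathrm{curl}\,\bm v_h$. The estimate \eqref{4.15} then gives the uniform bound
\[
\|\bm w_h\|_{r_D,\Omega}\lesssim\|\mathrm{curl}\,\bm v_h\|\le C .
\]
Since $r_D=\lambda^{Dir}-\varepsilon>0$ (in fact $>1/2$ on Lipschitz domains), the embedding $\bm H^{r_D}(\Omega)\hookrightarrow\bm L^2(\Omega)$ is compact by Rellich's theorem. Hence some subsequence $\bm w_{h_k}$ converges strongly in $\bm L^2(\Omega)$ to a limit $\bm w\in\bm L^2(\Omega)$.

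\textbf{Step 2: show the discrete and continuous functions are close.} By \eqref{4.16} followed by \eqref{b1},
\[
\|\bm w_h-\bm v_h\|\le\|r_h\bm w_h-\bm w_h\|\lesssim h^{r_D}\bigl(\|\bm w_h\|_{r_D,\Omega}+\|\mathrm{curl}\,\bm w_h\|\bigr)\lesssim h^{r_D}.
\]
The hypothesis of \eqref{b1} holds because $\mathrm{curl}\,\bm w_h=\mathrm{curl}\,\bm v_h$, and $\bm v_h\in\bm V_h\subset\widetilde{\bm V}_h$ means this curl lies in $\mathrm{curl}\,\widetilde{\bm V}_h$. Therefore $\|\bm v_{h_k}-\bm w\|\le\|\bm v_{h_k}-\bm w_{h_k}\|+\|\bm w_{h_k}-\bm w\|\to0$, which is the required strong convergence. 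As a by-product, $\bm w$ lies in $\bm X$, since $\bm X$ is closed in $\bm H(\mathrm{curl})$ and $\mathrm{curl}\,\bm w_{h_k}$ converges weakly.

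\textbf{Main obstacle.} The delicate point is that the proof of Lemma 3.4 uses the commuting projection $r_h$ and the discrete exact sequence on $\widetilde{\bm V}_h$: it requires $r_hH\bm v_h-\bm v_h\in\nabla U_h$, whereas $\bm X_h$ is only orthogonal to $\nabla U_h$ inside the smaller space $\bm V_h$. I would check this as follows.
\begin{enumerate}
\item Both $r_hH\bm v_h$ and $\bm v_h$ lie in $\widetilde{\bm V}_h$ and have the same curl, so their difference is the gradient of a $P_{p+1}$ function in $H^1_0(\Omega)$, i.e.\ it lies in $\nabla U_h$. This uses simple connectivity and the fact that $\nabla U_h$ is exactly the discrete kernel of curl in $\widetilde{\bm V}_h$.
\item With that, the orthogonality $(H\bm v_h-\bm v_h,\nabla q)=0$ for all $q\in U_h$ holds. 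It follows from $\bm v_h\in\bm X_h$ together with $H\bm v_h\in\bm X$.
\end{enumerate}
Once these two facts are confirmed, the argument above goes through unchanged.
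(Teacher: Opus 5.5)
Your proposal is correct and is essentially the paper's argument: map $\bm v_h$ to $H\bm v_h\in\bm X$, use \eqref{4.16}, \eqref{b1} and \eqref{4.15} to get $\|H\bm v_h-\bm v_h\|\lesssim h^{r_D}\|\mathrm{curl}\,\bm v_h\|\to 0$, and extract an $\bm L^2$-convergent subsequence of $\{H\bm v_h\}$. The only cosmetic difference is where the compactness comes from. The paper takes the subsequence from the compact embedding of $\bm X$ into $\bm L^2(\Omega)$, using only $\|\mathrm{curl}\,H\bm v_h\|=\|\mathrm{curl}\,\bm v_h\|\le M$. You instead use the uniform $\bm H^{r_D}$ bound \eqref{4.15} together with Rellich, which is the usual proof of that embedding. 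Your check that $r_hH\bm v_h-\bm v_h\in\nabla U_h$, via $\bm V_h\subset\widetilde{\bm V}_h$, is exactly what the paper's proof of \eqref{4.16} relies on.
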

\begin{proof}

  Let $\{\bm v_{h  }\}_{h\in\mathcal{H}}$  with $\|\bm v_{h  }\|_{\mathrm{curl}}\le M$ for a positive constant $M$. It is trivial to  assume that the sequece $h_i\in\mathcal{H}$ satisfies $h_i\rightarrow0$    as $i\to\infty$. By \eqref{4.16}, \eqref{b1} and \eqref{4.15}, $\|H{\bm v}_{h_i}- {\bm v}_{h_i}\|\lesssim h^{r_0} \|\mathrm{curl}\bm v_{h_i}\|\to 0$ as $i\to\infty$.
  Note that
  \begin{align*}
    \|\mathrm{curl}H{\bm v}_{h_i}\|=\|\mathrm{curl}{\bm v}_{h_i}\|\le M.
  \end{align*}
This means that  $\{H{\bm v}_{h_i}\}$ is bounded in $\bm H(\mathrm{curl},\Omega)$. Since $\bm X$ is compactly imbedded into $\bm L^2(\Omega)$, there is a subsequence of $\{H{\bm v}_{h_i}\}$ converging to some $\bm v_0$ in $\bm L^2(\Omega)$. Hence   a subsequence of $\{{\bm v}_{h_i}\}$ will converge to  $\bm v_0$ in $\bm L^2(\Omega)$ as well.
\end{proof}
The following uniform convergence can be derived from the discrete compactness property of $\bm X_{h  }$.
\begin{thm} 
There holds the uniform convergence
\begin{eqnarray*}
& \|{T}_{h  }-{T}\|_{\bm L^2(\Omega)\to \bm L^2(\Omega)}\rightarrow0,~h\rightarrow0.
\end{eqnarray*}
\end{thm}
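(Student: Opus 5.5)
I will argue by contradiction, following the standard route for mixed/edge element approximations (e.g.\ Monk, Chapter 7, or Boffi's framework). Suppose the uniform convergence fails. Then there exist $\delta>0$, a subsequence $h_i\to0$ in $\mathcal H$, and $\bm f_i\in\bm L^2(\Omega)$ with $\|\bm f_i\|=1$ such that
\[
\|(T_{h_i}-T)\bm f_i\|\ge\delta .
\]
The plan is to pass to weak limits, use the compactness of $T$ together with the discrete compactness of $\bm X_{h}$ (Lemma 3.6) to upgrade to strong convergence, and then identify the limit via the pointwise convergence in Theorem 3.5. This yields the contradiction.

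\textbf{Step 1 (uniform bounds).} Testing \eqref{p3} with $\bm v_h=T_h\bm f$ and using $T_h\bm f\in\bm X_h$, the orthogonal splitting $\bm V_h=\bm X_h\oplus\nabla U_h$, and the discrete Poincar\'e (Friedrichs) inequality on $\bm X_h$ gives $\|T_h\bm f\|_{\mathrm{curl}}\lesssim\|\bm f\|$ uniformly in $h$. This discrete Poincar\'e inequality itself follows from Lemma 3.4 (\eqref{4.15}, \eqref{4.16}) together with the continuous Friedrichs inequality on $\bm X$. Hence $\{T_{h_i}\bm f_i\}$ is bounded in $\bm H(\mathrm{curl})$ and lies in $\bm X_{h_i}$.

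\textbf{Step 2 (extracting limits).} Pass to a subsequence with $\bm f_i\rightharpoonup\bm f$ weakly in $\bm L^2$. Since $T$ is compact on $\bm L^2$ (it maps into $\bm X$, which embeds compactly), we get $T\bm f_i\to T\bm f$ strongly. By Lemma 3.6, a further subsequence gives $T_{h_i}\bm f_i\to\bm w$ strongly in $\bm L^2$. Moreover, by weak compactness in $\bm H_0(\mathrm{curl})$, $\mathrm{curl}\,T_{h_i}\bm f_i\rightharpoonup\mathrm{curl}\,\bm w$ with $\bm w\in\bm H_0(\mathrm{curl},\Omega)$.

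\textbf{Step 3 (identifying $\bm w=T\bm f$).} This is the main obstacle, since the spaces vary with $h$.
\begin{itemize}
\item Take any smooth $\bm v$ (dense in $\bm H_0(\mathrm{curl})$) and any $q\in C_0^\infty$.
\item Approximate them by $\bm v_h=\Pi_{h,0}\bm v\in\bm V_h$ (Lemma 3.1) and by the Lagrange interpolant $q_h\in U_h$, so that $\bm v_h\to\bm v$ in $\bm H(\mathrm{curl})$ and $q_h\to q$ in $H^1$.
\item Pass to the limit in \eqref{p3}. From the second equation, $(\bm w,\nabla q)=0$, so $\bm w\in\bm X$.
\item For the first equation, split $\bm v=\bm v^0+\nabla\psi$ with $\bm v^0\in\bm X$. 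Testing \eqref{p3} with $\nabla U_h$ shows that $S_h\bm f$ is the discrete $H^1_0$-projection of the potential of $\bm f$, so $\nabla S_{h_i}\bm f_i\rightharpoonup\nabla S\bm f$ weakly. The limits of $(\nabla S_{h_i}\bm f_i,\bm v_{h_i})$ therefore match those in \eqref{p2}.
\item Hence $\bm w$ satisfies \eqref{p2} with data $\bm f$, and uniqueness gives $\bm w=T\bm f$.
\end{itemize}
If this weak-limit argument for $S_h$ is delicate, I would instead test only with $\bm v\in\bm X$ smooth, approximating it by elements of $\bm X_h$. Alternatively, I would write $T_h\bm f_i-T\bm f_i=(T_h-T)\bm f+T_h(\bm f_i-\bm f)-T(\bm f_i-\bm f)$ and use the pointwise convergence $\|(T_h-T)\bm f\|\to0$ from Theorem 3.5 (with $\bm f$ fixed, via density and the uniform bound of Step 1 if $\bm f$ lacks regularity). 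In that case the key remaining claim is that $T_{h_i}\bm g_i\to0$ strongly whenever $\bm g_i\rightharpoonup0$, which follows from the same compactness-plus-identification argument with limit $T0=0$.

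\textbf{Conclusion.} Steps 2 and 3 give $\|(T_{h_i}-T)\bm f_i\|\le\|T_{h_i}\bm f_i-T\bm f\|+\|T\bm f-T\bm f_i\|\to0$, contradicting $\|(T_{h_i}-T)\bm f_i\|\ge\delta$. Hence $\|T_h-T\|_{\bm L^2\to\bm L^2}\to0$.
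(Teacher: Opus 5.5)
Your proof is correct, but it takes a different route from the paper's.

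\textbf{The paper's route.} The paper argues abstractly in three steps:
\begin{itemize}
\item It takes pointwise convergence $T_h\bm f\to T\bm f$ from Theorem~\ref{t1} together with the density of $\cup_h U_h$ in $H_0^1(\Omega)$.
\item It uses the discrete compactness of $\bm X_h$ to show that $\cup_h T_hBa$ is relatively compact in $\bm L^2(\Omega)$, i.e.\ collectively compact convergence.
\item It invokes self-adjointness of $T$ and $T_h$ and Proposition 3.7 in \cite{chatelin} to upgrade this to norm convergence.
\end{itemize}

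\textbf{Your route.} You prove norm convergence directly by contradiction. Discrete compactness gives a strong limit $\bm w$ of $T_{h_i}\bm f_i$. You then identify $\bm w=T\bm f$ by passing to the limit in \eqref{p3}. The test functions are $\Pi_{h,0}\bm v$ with $\bm v\in(C_0^\infty(\Omega))^d$, which is dense in $\bm H_0(\mathrm{curl},\Omega)$, and nodal interpolants of $q\in C_0^\infty(\Omega)$.

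The one step that deserves an explicit line is the weak convergence $\nabla S_{h_i}\bm f_i\rightharpoonup\nabla S\bm f$. It holds for the following reasons:
\begin{itemize}
\item Testing \eqref{p3} with $\nabla q_h$ gives $(\nabla S_{h}\bm f_i,\nabla q_h)=(\bm f_i,\nabla q_h)$ for all $q_h\in U_h$.
\item This yields the bound $\|\nabla S_{h_i}\bm f_i\|\le\|\bm f_i\|$.
\item Density of $U_h$ in $H_0^1(\Omega)$, together with $\bm f_i\rightharpoonup\bm f$, forces every weak limit to satisfy $(\nabla s,\nabla q)=(\bm f,\nabla q)$ for all $q\in H_0^1(\Omega)$. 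Hence $s=S\bm f$.
\end{itemize}
The splitting $\bm v=\bm v^0+\nabla\psi$ and your fallback variants are therefore unnecessary.

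\textbf{Comparison.} Your argument uses only consistency (density of the discrete spaces) and discrete compactness. It needs neither self-adjointness of $T_h$ nor the regularity-dependent rates of Theorem~\ref{t1}. In effect it reproves, in this concrete setting, the principle that discrete compactness plus approximability implies uniform convergence. The paper's route is shorter, but it rests on an external abstract theorem and on Theorem~\ref{t1} being applicable to every $\bm f\in\bm L^2(\Omega)$.
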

\begin{proof}
Since   $\cup_{h\in\mathcal{H}}  U_{h  }$ is dense in   $  H^1_0(\Omega)$, we deduce from  Theorem \ref{t1} that $  T_{h  }$ converges to $  T$ pointwisely in {$\bm L^2(\Omega)$}. Thanks to  the discrete compactness
 of $\bm X_{h  }$, $\cup_{h\in\mathcal{H}}T_{h  }Ba$ is a relatively compact set in $\bm L^2(\Omega)$ where $Ba$ is the unit ball in $\bm L^2(\Omega)$. In fact, Let us choose any sequence $\{\bm v_{h  }\}_{h\in\mathcal{H}}\subset Ba$. Note that $T$ is compact from $\bm X$ to $\bm L^2(\Omega)$ then $\{T\bm v_{h  }\}_{h\in\mathcal{H}}$ is a relatively compact set in $\bm L^2(\Omega)$. Hence it holds the collectively compact convergence ${T}_{h  }\rightarrow {T}~in~\bm  L^2(\Omega) \text{ as}~h\rightarrow0$. Noting $T,T_{h  }: \bm L^2(\Omega) \rightarrow \bm L^2(\Omega)$ are self-adjoint, due to Proposition 3.7 or Table 3.1
in \cite{chatelin}  the assertion is valid.
\end{proof}

We are in a position to prove the error estimate for the numerical eigenvalues.
\begin{thm}
Let $\lambda_{h  }$ be an eigenvalue of  \eqref{equ:Deig} converging to  the eigenvalue $\lambda$ of \eqref{2.10a}. Under the condition of theorem 3.5, let 
 $M(\lambda)\subset \{{\bm u}\in \bm H^{r}(\Omega): \bm u\in \bm H^{r+1}(\Omega)+\nabla H_0^1(\Omega) \}$ for some $r\in [r_0,p]$   then
\begin{align}\label{4.19}
&|\lambda-{\lambda}_{h}| \lesssim h^{2r},
\end{align}
where   ${M}(\lambda)$    denotes the space spanned by all eigenfunctions  corresponding to the eigenvalue $\lambda$.
\end{thm}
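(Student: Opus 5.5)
The plan is to apply the Babu\v{s}ka--Osborn theory \cite{babuska} for compact self-adjoint operators to the pair $T,T_h$. Theorem 3.6 gives the needed uniform convergence $\|T_h-T\|_{\bm L^2\to\bm L^2}\to0$. Since both operators are self-adjoint on $\bm L^2(\Omega)$, the eigenvalue error is controlled by
\begin{align*}
|\lambda-\lambda_h|\lesssim \sup_{\bm u\in M(\lambda),\|\bm u\|=1}|((T-T_h)\bm u,\bm u)|+\|(T-T_h)|_{M(\lambda)}\|^2_{\bm L^2\to\bm L^2}.
\end{align*}
So the work reduces to bounding these two terms by $h^{2r}$ for $\bm u\in M(\lambda)$. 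Because $M(\lambda)$ is finite dimensional, it suffices to argue for a single normalized eigenfunction $\bm u$. For such $\bm u$ we have $T\bm u=\lambda^{-1}\bm u$ and $S\bm u=0$, because $\bm u\in\bm X$.

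The main step is the Galerkin-type identity for the first term. Since $\bm u\in\bm X$, we have $S\bm u=S_h\bm u=0$. Then \eqref{p2} and \eqref{p3} with the test functions $T\bm u$ and $T_h\bm u$ give $(\bm u,T\bm u)=\|\mathrm{curl}\,T\bm u\|^2$ and $(\bm u,T_h\bm u)=\|\mathrm{curl}\,T_h\bm u\|^2$. Testing \eqref{p2} with $T_h\bm u\in\bm V_h\subset\bm H_0(\mathrm{curl},\Omega)$ gives $(\mathrm{curl}\,T\bm u,\mathrm{curl}\,T_h\bm u)=(\bm u,T_h\bm u)$. Combining these yields
\begin{align*}
((T-T_h)\bm u,\bm u)=\|\mathrm{curl}(T-T_h)\bm u\|^2 .
\end{align*}

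Theorem \ref{t1}, in the form \eqref{2.1}, then bounds this by $(h^r+0)^2=h^{2r}$, because the infimum over $U_h$ vanishes when $S\bm u=0$. For this I would check the hypotheses of Theorem \ref{t1}. By assumption $T\bm u=\lambda^{-1}\bm u\in\bm H^r(\Omega)$, and the decomposition $\bm u=\bm w_0+\nabla\phi$ with $\bm w_0\in\bm H^{r+1}(\Omega)$ comes from the hypothesis on $M(\lambda)$. The one detail to watch is that the proof of \eqref{2.1} inserts $\bm\Pi_{h,0}\bm w_0$, which requires $\bm w_0$ to have zero tangential trace. This holds if $\phi\in H^1_0(\Omega)$, since then $\nabla\phi\times\bm n=0$ and $\bm w_0\times\bm n=\bm u\times\bm n=0$; so I would take the decomposition in $\bm H^{r+1}(\Omega)+\nabla H^1_0(\Omega)$ as in Lemma \ref{ll1}.

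For the second term, \eqref{2.1} already gives $\|(T-T_h)\bm u\|\le\|(T-T_h)\bm u\|_{\mathrm{curl}}\lesssim h^r$, and squaring gives $h^{2r}$. Alternatively, \eqref{2.1a} with $\mathrm{div}\,\bm u=0$ gives an even better bound. Extending from one eigenfunction to the operator norm on the finite-dimensional space $M(\lambda)$ costs only a constant. Adding the two bounds gives \eqref{4.19}.

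The main obstacle I expect is the precise form of the Babu\v{s}ka--Osborn estimate in the presence of the infinite-dimensional kernel: the gradients lie in the kernel of $T$ and of $T_h$. I would handle this by working in $\bm L^2(\Omega)$, where both operators are self-adjoint and compact in the sense needed. The eigenvalue $\mu=\lambda^{-1}$ of $T$ is nonzero and isolated, and the generalized eigenspace of $T$ for $\mu$ coincides with $M(\lambda)$. The standard estimate is then applied on this finite-dimensional space.
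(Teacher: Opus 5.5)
Your proposal is correct and follows essentially the same route as the paper. You apply Babu\v{s}ka--Osborn (Theorem 7.3 in \cite{babuska}), use $S\bm u=S_h\bm u=0$ for $\bm u\in\bm X$ to turn the first term into $\|\mathrm{curl}(T-T_h)\bm u\|^2$, and then invoke \eqref{2.1} with vanishing $S$-term; the only difference is that the paper writes the identity in bilinear form, $((T-T_h)\bm\phi_i,\bm\phi_j)=(\mathrm{curl}(T-T_h)\bm\phi_i,\mathrm{curl}(T-T_h)\bm\phi_j)$ over a basis of $M(\lambda)$, which is equivalent to your quadratic-form version by self-adjointness and polarization.
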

\begin{proof} Let $\lambda=\lambda_k$ be the $k$th Maxwell eigenvalue and $\dim M(\lambda)=q$.
From   
 Theorem 7.3 
in \cite{babuska} we get
\begin{eqnarray}
&&|\lambda-{\lambda}_{h}| \lesssim \sum\limits_{i,j=k}^{k+q-1}| ((T-T_{h})\bm\phi_{i},\bm\phi_{j})|
+\|(T-T_{h}) |_{M(\lambda)} \|_{\bm L^2(\Omega)\to \bm L^2(\Omega)}^2,\label{l1}
\end{eqnarray}
where $\bm \phi_{k},\cdots,\bm \phi_{k+q-1}$ are  a set of  basis functions for $M(\lambda)$.
Note that  $S\bm f=S_h\bm f=0$  for any $\bm f\in \bm X$ in (\ref{p2}) and (\ref{p3}). 
 Hence the following Garlerkin orthogonality holds:
 \begin{align*}
  ((T-T_{h})\bm\phi_{i},\bm \phi_{j})=(\mathrm{curl}(T-T_{h})\bm\phi_{i},\mathrm{curl}T\bm \phi_{j})
  =(\mathrm{curl}(T-T_{h})\bm\phi_{i},\mathrm{curl}(T-T_{h})\bm \phi_{j}).
 \end{align*}
Substituting this into \eqref{l1}, we deduce (\ref{4.19}) from  \eqref{2.1}.
\end{proof}

\section{Super-convergence of Average-type \textrm{curl} Recovery Scheme in   Extended Linear Lagrange element}
  We define $\mathcal{N}_{h}$ as the nodal set of a quasi-uniform triangulation $\mathcal{T}_{h}$. Given $z \in \mathcal{N}_{h}$, we consider an element patch $\omega_z$ around $z$ where we choose as the origin of a local coordinates. Under this coordinate system, we let $z$ be the barycenter of a simplex $K_{j} \subset \omega_z, j=1,2, \ldots, m_z$, where $m_z$ is the number of elements containing the vertex $z$. 

Next we shall propose an average type curl recovery scheme, which is a similar version as  in \cite{huang}. 
Let us define the simple averaging operator $C_{h}:  \{\bm v\in\bm H_0(\mathrm{curl}, \Omega):\mathbf{curl}(\bm v|_{\overline K})\in \bm C^0(\overline K),~\forall K\in\mathcal T_h\} \mapsto \bm{L}_h$ as follows:

%
$$
C_{h}\bm u_{h}(z)=  \frac{1}{m_{z}} \sum_{K \subset \omega_{z}} \mathrm{curl} \bm u_{h}|_{K}.
$$
where  $\bm L_h$
is the tensorial linear Lagrange element space for $d=3$ (or the  linear Lagrange element space for $d=2$). 

%
%

Note that the recovery operator $C_{h}$ preserves $P_2$ polynomials at the node $z$:
\begin{align}
  (C_{h}  \bm p) (z)=(\mathrm{curl} \bm p) (z),\quad \forall\bm p\in \bm P_2(\omega_z).
\end{align}

Define $\bm {PW}^{3,\infty}(\Omega):=\{\bm v\in\bm L^{\infty}(\Omega):\bm v|_K\in \bm W^{3,\infty}(K),~ \forall K\in \mathcal T_h\}$.
\begin{thm} Let $T\bm f\in \bm {PW}^{3,\infty}(\Omega)$  then there holds the following supercloseness estimation for $C_{h}$: 
\begin{align}
\left\|\mathrm{curl}  T\bm f-C_{h}  T_{}\bm f\right\|_{} \lesssim h^{2}\|T\bm f\|_{\bm {PW}^{3,\infty}(\Omega)}.
\end{align}
\end{thm}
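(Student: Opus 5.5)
The approach is to compare $C_h T\bm f$ with the nodal (linear Lagrange) interpolant $I_h$ of $\mathrm{curl}\,T\bm f$. Since $C_hT\bm f\in\bm L_h$ is determined by its nodal values, one expects $C_hT\bm f$ to coincide with $I_h\,\mathrm{curl}\,T\bm f$ up to an $O(h^2)$ nodal perturbation. On each $K\in\mathcal T_h$ I split
\begin{align*}
\mathrm{curl}\,T\bm f-C_hT\bm f=\bigl(\mathrm{curl}\,T\bm f-I_h\,\mathrm{curl}\,T\bm f\bigr)+I_h\bigl(\mathrm{curl}\,T\bm f-C_hT\bm f\bigr),
\end{align*}
where in the second term $\mathrm{curl}\,T\bm f$ enters only through its nodal values. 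The first term is the standard linear interpolation error. Because $T\bm f|_K\in\bm W^{3,\infty}(K)$, we have $\mathrm{curl}\,T\bm f|_K\in\bm W^{2,\infty}(K)$, so $\|\mathrm{curl}\,T\bm f-I_h\mathrm{curl}\,T\bm f\|_{0,K}\lesssim h^2|K|^{1/2}\|T\bm f\|_{3,\infty,K}$. This is the same scaling argument as in \eqref{3.3} with $s=2$, $m=0$, now with $L^\infty$ norms.

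For the second term, nodal basis functions are bounded, so $\|I_h(\cdot)\|_{0,K}\lesssim|K|^{1/2}\max_{z\in K}|\mathrm{curl}\,T\bm f(z)-C_hT\bm f(z)|$. It therefore suffices to prove the nodal estimate $|\mathrm{curl}\,T\bm f(z)-C_hT\bm f(z)|\lesssim h^2\|T\bm f\|_{\bm{PW}^{3,\infty}(\omega_z)}$. For this I use the $P_2$-preservation property stated just before the theorem. On each $K_j\subset\omega_z$, let $\bm p_j$ be the quadratic Taylor polynomial of $T\bm f|_{K_j}$ at the origin $z$. Then $\mathrm{curl}(T\bm f-\bm p_j)|_{K_j}$ vanishes at $z$ to second order, so
\begin{align*}
|\mathrm{curl}(T\bm f-\bm p_j)(z)|\lesssim h^2\|T\bm f\|_{3,\infty,K_j}.
\end{align*}
When the Taylor polynomials agree across the patch, $\bm p_j=\bm p$, and the preservation property gives $C_h\bm p(z)=\mathrm{curl}\,\bm p(z)$. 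Then
\begin{align*}
\mathrm{curl}\,T\bm f(z)-C_hT\bm f(z)=\mathrm{curl}(T\bm f-\bm p)(z)-\frac1{m_z}\sum_{j}\mathrm{curl}(T\bm f-\bm p)|_{K_j}(z),
\end{align*}
and each term is $O(h^2)$. Indeed, the one-sided values all equal $\mathrm{curl}\,T\bm f(z)$ when $\mathrm{curl}\,T\bm f$ is continuous at $z$, so the nodal error is then exactly zero. Summing the squared local bounds over $K$ and using $\sum_K|K|\lesssim1$ together with the finite overlap of patches (shape regularity) yields the global $h^2$ bound.

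The main obstacle is the genuinely piecewise setting. If $\mathrm{curl}\,T\bm f$ has jumps across element interfaces, the one-sided values $\mathrm{curl}\,T\bm f|_{K_j}(z)$ differ by $O(1)$, and a simple average cannot be $O(h^2)$-close to any single nodal value. I would first resolve this by interpreting the hypothesis as in the rest of the section. Because $T\bm f\in\bm H_0(\mathrm{curl},\Omega)$ and $\mathrm{curl}^2T\bm f=\bm f-\nabla S\bm f$, the field $\mathrm{curl}\,T\bm f$ lies in $\bm H(\mathrm{div})\cap\bm H(\mathrm{curl})$. With piecewise $W^{3,\infty}$ regularity and no material interfaces, this forces continuity of $\mathrm{curl}\,T\bm f$ at the nodes, so the Taylor polynomials can be chosen patchwise. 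If that fails, I would restrict the nodal estimate to nodes whose patch is contained in a smoothness region. I would then bound the contribution of the $O(h)$-neighbourhood of any remaining discontinuity set separately; that contribution is what would have to be shown to be of higher order.
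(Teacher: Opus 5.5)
Your proof is correct, and it uses the paper's decomposition: the linear interpolation error of $\mathrm{curl}\,T\bm f$, plus the interpolant of the nodal discrepancies $C_hT\bm f(z)-\mathrm{curl}\,T\bm f(z)$. The two arguments differ in how they treat the nodal term.

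The paper inserts an arbitrary $\bm q_z\in\bm P_2(\omega_z)$ and appeals to the $P_2$-preservation of $C_h$, which is a Bramble--Hilbert-type step. Because $C_h\bm q_z(z)=\mathrm{curl}\,\bm q_z(z)$, the quantity under the infimum does not actually depend on $\bm q_z$. The intended $O(h^2)$ bound would also need $T\bm f\in\bm W^{3,\infty}(\omega_z)$ on the whole patch, which elementwise $\bm{PW}^{3,\infty}$ regularity does not supply.

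You instead take the missing fact from \eqref{p2}. There $\mathrm{curl}\,T\bm f\in\bm H(\mathrm{curl})\cap\bm H(\mathrm{div})$; in 2D, $\mathrm{curl}\,T\bm f\in H^1$. An elementwise $W^{2,\infty}$ field with this property has matching tangential and normal traces on every interior face. Chaining face-neighbours around $z$, its one-sided nodal values therefore coincide. Hence $C_hT\bm f=I_h\,\mathrm{curl}\,T\bm f$ exactly, and the theorem reduces to a pure interpolation estimate. This is more rigorous and more transparent than the paper's step, and it makes the $P_2$/Taylor machinery unnecessary. Two small points on that machinery:
\begin{itemize}
\item Your bound $|\mathrm{curl}(T\bm f-\bm p_j)(z)|\lesssim h^2$ understates things: that value is exactly zero.
\item Continuity of the curl does not by itself make the quadratic Taylor polynomials agree across the patch. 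You never need this, so the claim ``the Taylor polynomials can be chosen patchwise'' can simply be dropped.
\end{itemize}

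You should also drop the fallback branch, because it could not succeed. If $\mathrm{curl}\,T\bm f$ genuinely jumped (e.g.\ with interface coefficients), the nodal error would be $O(1)$ on an $O(h)$-wide strip. That gives only $O(h^{1/2})$ in $L^2$, so this contribution cannot be shown to be of higher order, and the statement itself would fail. In the paper's constant-coefficient setting this case never arises, so your main argument is complete.
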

\begin{proof}
We decompose
$$
\mathrm{curl}T\bm f-C_{h} T_{}\bm f=\left(\mathrm{curl}T\bm f-(\mathrm{curl}T\bm f)_{I}\right)+\left((\mathrm{curl}T\bm f)_{I}-C_{h} T\bm f\right)
$$
where $(\mathrm{curl}T\bm f)_{I} \in \bm L_{h} $ is the piecewise linear interpolation of $\mathrm{curl}T\bm f$. By the standard approximation theory,
$$
\left\|\mathrm{curl} T\bm f-(\mathrm{curl}T\bm f)_{I}\right\|_{} \lesssim h^{2}\|T\bm f\|_{\bm {PW}^{3,\infty}(\Omega)}. 
$$
Moreover
$$
\begin{aligned}
\left\|(\mathrm{curl}T\bm f)_{I}-C_{h}T\bm f\right\|_{} & \leq\left(\sum_{K \in \Omega_{}}|K| \sum_{z \in \mathcal{N}_{h} \cap \bar{K}}\inf_{\bm q_z\in \bm P_2(\omega_z)}\left|C_{h} (T\bm f+\bm q_z)(z)-\mathrm{curl} (T\bm f+\bm q_z)(z)\right|^{2}\right)^{1 / 2} \\
& \lesssim h^{2}\|T\bm f\|_{\bm {PW}^{3,\infty}(\Omega)} \left|\Omega\right|^{1 / 2} \lesssim h^{2}\|T\bm f\|_{\bm {PW}^{3,\infty}(\Omega)}
\end{aligned}
$$
where $\bm q_z$ is an arbitrary quadratic polynomial in $\omega_z$ and $C_{h}  \bm q_z(z)=\mathrm{curl} (\bm q_z)(z)$. The combination of the above two estimates gives the conclusion.
\end{proof}

\section{Numerical Experiments}

In this section we provide numerical experiments using the extended linear and quadratic  Lagrange elements to support our theoretical work.   We generate the FE meshes as  the following approach.   For  the  cube and the thick L-shape domain in 3D, we partition the domains into small congruent
 cubes and then partition each
small cube into 12 tetrahedra.  For  the  tetrahedron in 3D, in  each mesh refinement we partition each tetrahedron in the mesh into 8 small tetrahedra. We refer the reader to \cite{chen} for the detail of the mesh refinement. For    the thick L-shape domain and the tetrahedron, the coarse  meshes and boundary degrees of freedom in extended quadratic Lagrange element are depicted in Figure 1.

\subsection{Numerical results for Maxwell equation}

First we shall consider the source problem \eqref{p2} of  Maxwell equations in $\Omega = (0,1)^3$ with the  exact solution

 \begin{equation}
         T\bm f= \begin{pmatrix}
           sin(\pi x_1)^3sin(\pi x_2)^2sin(\pi x_3)^2cos(\pi x_2) cos(\pi x_3) \\
           sin(\pi x_2)^3sin(\pi x_3)^2sin(\pi x_1)^2cos(\pi x_3)cos(\pi x_1) \\
           -2sin(\pi x_3)^3sin(\pi x_1)^2sin(\pi x_2)^2cos(\pi x_1)cos(\pi x_2)
          \end{pmatrix}.
 \end{equation}
 We compute the numerical solution $T_h\bm f$ and the error $\bm e_h:=T_h\bm f-T\bm f$.
 We also compute the recovery $ C_{h}T_h\bm f$ of $\mathrm{curl}T_h\bm f$ and denote the error $\delta CT_h\bm f:=C_{h}T_h\bm f-\mathrm{curl}T\bm f$.
 The errors of  numerical solutions obtained by the extended
linear  and quadratic  Lagrange  elements are listed in Table 1. It can be seen that the convergence rates in  the $\bm H(\mathrm{curl})$-seminorm and the $L^2$-norm are about 1 and 2 for the linear and quadratic  Lagrange  elements, respectively. It is also shown  from Table 1 that the recovery $ C_{h}T_h\bm f$ is a supercloseness to $\mathrm{curl}T_h\bm f$.


 \subsection{Numerical results for Maxwell eigenvalues}
 \indent Let $\{\bm \xi_i\}_{i=1}^{N_1}$ and $\{\bm \phi_i\}_{i=1}^{N_2}$ be the bases of $\bm L_{h,0}$ and $U_h$, respectively.
The eigenfunction in \eqref{equ:Deig} is written as $\bm u_h=\sum_{i=1}^{N_1}u_i\bm\xi_i+\sum_{i=1}^{N_2}u_{i+N_1}\nabla \phi_i$.
Denote $N_h:=N_1+N_2$ and $\vec{u}=(u_1,\cdots,u_{N_h})^T$.
  To describe our algorithm,
we specify
the following matrices in  the discrete case.\\
\begin{center} \footnotesize
\begin{tabular}{lllll}\hline
Matrix&Dimension&Definition\\\hline
$A_{11}$&$N_1\times N_1$&$A_{11}(l,i)=\int_\Omega \mathrm{curl}\bm\xi_i\cdot\mathrm{curl}\bm\xi_ldx$\\
$B_{11}$&$N_1\times N_1$&$B_{11}(l,i)=\int_\Omega  \bm\xi_i\cdot\bm\xi_ldx$\\
$B_{12}$&$N_1\times N_2$&$B_{12}(l,i)=\int_\Omega  \nabla\phi_l \cdot\bm\xi_idx$\\
$B_{22}$&$N_2\times N_2$&$B_{22}(l,i)=\int_\Omega  \nabla\phi_i \cdot\nabla\phi_ldx$\\
\hline
\end{tabular}
\end{center}
 Then  the discretization  can be written as the
generalized eigenvalue problems
\begin{align*}
A\vec u=\lambda B\vec u.
\end{align*}
where
\begin{eqnarray} \label{s5.1}
A:=\left(
\begin{array}{lcr}
A_{11}&0\\
0&0
\end{array}
\right)\quad\text{and}\quad
B:=\left(
\begin{array}{lcr}
B_{11}&B_{12}\\
B^T_{12}&B_{22}
\end{array}
\right).
\end{eqnarray}

In order to give the number of degrees of freedom (Dofs) for the extended Lagrange element methods, we use $N$, $NE$, $NF$, $NT$ to denote the number of nodes, edges, faces and tetrahedra in the mesh.
They satisfy the relations $N:NE:NF:NT\approx1:7:12:6$ in $\mathbb R^3$ and $N:NE:NF\approx1:3:2$ in $\mathbb R^2$.
We use NDofs and $\overline{\text{NDofs}}$ to denote the the number of Dofs in the extended Lagrange element methods and the second family of the N\'{e}d\'{e}lec element methods, respectively. For the comparative purpose they are listed  in the following table.

 \begin{center} \footnotesize
\begin{tabular}{llllll}\hline
&$p$&$Dim(\bm L_{h,0})$&$Dim(U_h)$&NDofs&$\overline{\text{NDofs}}$\\\hline
\multirow{2}*{$d=3$}&$1$&$3N$&$N+NE\approx 8N$&$ 11N$&$3NE \approx 14N$\\
&$2$&$3N+3NE\approx 24N$&$N+2NE+NF\approx 27N$&$ 52N$&$3NE + 3NF \approx57N$\\
\multirow{2}*{$d=2$}&1&$2N$&$N+NE\approx  4N$&$ 6N$&$2NE \approx 6N$\\
&2&$2N+2NE\approx 8N$&$N+2NE+NF\approx 9N$&$ 17N$&$3NE + 3NT \approx15N$\\
\hline
\end{tabular}
\end{center}

The above table shows that the computational complexity of the extended Lagrange element method in 3D is close to but less than the second family of N\'{e}d\'{e}lec element method in 3D when $p=1,2$.
\begin{remark}

If $\bm L_{h,0}\cap \nabla U_h\ne\{\bm0\}$, there is a nonzero vector $\vec u$ such that $\bm u_h=0$. Such $\vec u$ is exactly an eigenvector of \eqref{s5.1}. 
We can filter the algebraic spurious eigenvalue $\lambda_h$ using the following formula
$$\frac{\sqrt{|(\mathrm{curl}\bm u_h,\mathrm{curl}\bm u_h)-\lambda_h(\bm u_h,\bm u_h)|}}{\|\bm u_h\|}>\mathcal E$$
where $\mathcal E$ is the small tolerance.
\end{remark}

  We now test the new methods for solving the   Maxwell eigenvalue problem \eqref{2.10a}. The  first eight eigenvalues on different domains
  are listed as follows:

\begin{center}
\footnotesize
 \begin{tabular}{ccccccccc}
  \hline
Unit Square: &$\pi^2$&$\pi^2$&$2\pi^2$&$4\pi^2$&$4\pi^2$&$5\pi^2$&$5\pi^2$&$8\pi^2$\\\hline
L-shape:& 1.4756218&3.5340314&$\pi^2$&$\pi^2$&11.389479&12.57219&$2\pi^2$&21.4242598\\\hline
Unit Cube:& $2\pi^2$&$2\pi^2$&$2\pi^2$&$3\pi^2$&$3\pi^2$&$5\pi^2$&$5\pi^2$&$5\pi^2$\\\hline
Thick L-shape:&9.63972& 11.34523&
13.40364& 15.19725& 19.50933&$2\pi^2$&$2\pi^2$&$2\pi^2$
  \\\hline
 \end{tabular}
\end{center}
 Let $(\lambda_{i},\bm u_{i})$ be the $i$th Maxwell eigenpair and $(\lambda_{i,h},\bm u_{i,h})$ be its approximation obtained by the extended Lagrange FEM.  From   \cite{babuska} we have
\begin{align}
\lambda_{i,h}-\lambda_i=\frac{(\mathrm{curl}(\bm u_{i,h}-\bm u_i),\mathrm{curl}(\bm u_{i,h}-\bm u_i))}{(\bm u_{i,h},\bm u_{i,h})}-\lambda\frac{(\bm u_{i,h}-\bm u_i,\bm u_{i,h}-\bm u_i)}{(\bm u_{i,h},\bm u_{i,h})},\quad i=1,2,\cdots.
\end{align}
If the eigenfunction $\bm u$   is  smooth,   it is expected that $\lambda_{i,h}\ge\lambda_i$ because $\|\bm u_h-\bm u\|$ is of higher order than $\|\mathrm{curl}(\bm u_h-\bm u)\|$.

We first use the extended linear Lagrange FEM to  compute for the lowest eight eigenvalues.
It can be seen from Tables 2-6 that for the case of  the unit square in 2D (c.f. Table 2) and the cube,  the   tetrahedron and the thick L-shape in 3D(c.f. Tables 4-6), all computed eigenvalues obtained by  extended
linear  Lagrange  FEMs present the upper bounds of the associated eigenvalues; so do the numerical eigenvalues $\lambda_{i,h}$ $(i=2,\cdots,8)$ on the L-shape in 2D; however $\lambda_{1,h}$ obtained by the extended linear Lagrange FEM  on the L-shape domain does not  because the associated eigenfunction $\bm u_1$ is strongly unbounded singular.  The same is   for $\lambda_{2,h}$ obtained by the extended quadratic Lagrange FEM.
In all cases given here the  convergence rates of the computed eigenvalues by extended linear Lagrange FEM are around 2, which are optimal.

We next use the extended quadratic Lagrange FEM to compute for the lowest eight eigenvalues. It can be seen from Tables 2-6 that  for the case of the unit square in 2D(c.f. Table 2)
and the cube, the tetrahedron and the thick L-shape in 3D(c.f. Tables 4-6), all computed eigenvalues obtained by  extended
 quadratic Lagrange  FEMs present the upper bounds of the associated eigenvalues; so do the numerical eigenvalues $\lambda_{i,h}$$(i=3,4,5,7)$ on the L-shape in 2D(c.f. Table 3); however $\lambda_{i,h}$$(i=1,2,6,8)$ obtained by the extended quadratic Lagrange FEM  on the L-shape domain does not  because of the singularity of  the associated eigenfunctions.

 The   convergence rates of the computed eigenvalues by extended quadratic Lagrange FEM on the square in 2D(c.f. Table 2) and the cube and tetrahedron in 3D(c.f. Tables 4-5) are around 4, which are optimal.
However,  this is not true for the computed  eigenvalue $\lambda_{i,h}$$(i=1,2,6,8)$   obtained by the extended quadratic Lagrange FEM on the L-shape domain in 2D(c.f. Table 3) and the computed  eigenvalue $\lambda_{i,h}$$(i=1,2,5)$ obtained by the extended quadratic Lagrange FEM on the thick L-shape domain in 3D(c.f. Table 6). This is due to the singularity of the associated eigenfunctions.

\subsection{Supercloseness lower bound by recovery approach}

Using the recovery approach in \cite{naga}, for the numerical
eigenpair $(\lambda_{i,h},\bm u_{i,h})$ obtained by the extended linear Lagrange FEM, we shall  compute the  a-posteriori error estimator of recovery type
$$\eta_h(\bm u_{i,h}):=\frac{\|\mathrm{curl}\bm u_{i,h}-C_{h}\bm u_{i,h}\|^2}{\|\bm{u}_{i,h}\|^2},\quad i=1,2,\cdots.$$
Let $\tilde\lambda_{i,h}:=\lambda_{i,h}-\eta_h(\bm u_{i,h})$. It is expected that $\tilde\lambda_{i,h}$ has the supercloseness property when the eigenfunction $\bm u_i$ is smooth.

For the case of the unit square in 2D and the cube in 3D, it can be seen from Tables 7 and 9 that  $\tilde\lambda_{i,h}$ ($i=1,2,3$) are   supercloseness lower bounds of $\lambda_{i}$($i=1,2,3$), respectively.

For the case of the  L-shape domain in 2D, we can see from Table 8 that $\tilde\lambda_{i,h}$ ($i=2,3,4$) are   supercloseness lower bounds of $\lambda_{i}$($i=2,3,4$), respectively;  we can see from Table 3 that $\lambda_{1,h}$ does not present an upper bound of $\lambda_1$ because the associated eigenfunction $\bm u_1$ is strongly unbounded singular. For this reason  the supercloseness property  of $\tilde\lambda_{1,h}$ is invalid and  we do not list $\tilde\lambda_{1,h}$.

For the case of the thick L-shape in 3D, we can see from Table 9 that $\tilde\lambda_{i,h}$ ($i=1,2,3$) are    lower bounds of $\lambda_{i}$ ($i=1,2,3$), respectively,  and meanwhile $\tilde\lambda_{3,h}$ is   a supercloseness of $\lambda_3$. $\tilde\lambda_{1,h}$ and $\tilde\lambda_{2,h}$ do not supercloseness property due to the singularity of the eigenfunctions $\bm u_1$ and $\bm u_2$.

\begin{table}[htbp]\footnotesize
 \caption{Error for the solution in \eqref{p2} on the unit cube using extended linear (left) and quadratic (right) Lagrange finite element methods.}
 \centering
 \begin{tabular}{ccccccc}
  \hline
$h$&  1/2&	 1/4&	 1/6&	 1/8&	 1/10\\\hline
$\|\mathrm{curl}(\bm e_h)\|$&0.7516&	0.4818&	0.3351&	0.2556&	0.2062\\$Rate$&--&	0.64& 	0.90& 	0.94& 	0.96\\
$\|\bm e_h\|$&0.0555&	0.0244&	0.0117&	0.0067&	0.0044\\$Rate$&--&	1.19& 	1.81& 	1.94& 	1.88\\
$\|\delta C\mathrm{curl}T_h\bm f\|$&0.8063&	0.3778&	0.1363&	0.056&	0.0266\\$Rate$&--&		1.09& 	2.51& 	3.09& 	3.34
  \\\hline
 \end{tabular}
 ~
 \begin{tabular}{ccccccc}
  \hline
  1/2&	 1/3&	 1/4&	 1/5&1/6\\\hline
0.425&	0.2291&	0.1168&	0.0769&	0.0544\\--	&1.52& 	2.34& 	1.87& 	1.90\\
0.017&	0.0059&	0.0023&	0.0012&	7.38E-04\\--	&2.61& 	3.27& 	2.92& 	2.67
  \\\hline
  \\\\
 \end{tabular}
 \label{PG}
 \end{table}

\begin{table}[htbp]\footnotesize
 \caption{Numerical eigenvalues on the unit square obtained by extended linear (left) and quadratic (right) Lagrange finite element methods.}
 \centering
 \begin{tabular}{ccccccc}
  \hline
$h$&    1/4& 	  1/8& 	  1/16&	  1/32\\\hline
$\lambda_{1,h}$&10.0918& 	9.92542& 	9.88357& 	9.87310\\
$Rate$&--&1.99$\downarrow$& 	2.00$\downarrow$& 	2.00$\downarrow$\\
$\lambda_{2,h}$&10.1067$\downarrow$& 	9.92907$\downarrow$& 	9.88448$\downarrow$& 	9.87332$\downarrow$\\
$Rate$&--&2.00$\downarrow$& 	2.00$\downarrow$& 	2.00$\downarrow$\\
$\lambda_{3,h}$&20.7531& 	19.9912& 	19.8021& 	19.7549\\
$Rate$&--&2.01$\downarrow$& 	2.00$\downarrow$& 	2.00$\downarrow$\\
$\lambda_{4,h}$&43.1269& 	40.4107& 	39.7121& 	39.5368\\
$Rate$&--&1.97$\downarrow$& 	2.00$\downarrow$& 	2.00$\downarrow$\\
$\lambda_{5,h}$&43.3726& 	40.4638& 	39.7252& 	39.5401\\
$Rate$&--&1.98$\downarrow$& 	2.00$\downarrow$& 	2.00$\downarrow$\\
$\lambda_{6,h}$&55.4626& 	50.8812& 	49.7304& 	49.4435\\
$Rate$&--&2.00$\downarrow$& 	2.00$\downarrow$& 	2.00$\downarrow$\\
$\lambda_{7,h}$&55.9247& 	50.9809& 	49.7541& 	49.4494\\
$Rate$&--&2.01$\downarrow$& 	2.01$\downarrow$& 	2.00$\downarrow$\\
$\lambda_{8,h}$&94.8772& 	83.0772& 	79.9849& 	79.2136\\
$Rate$&--&1.95$\downarrow$& 	2.00$\downarrow$& 	2.00$\downarrow$
  \\\hline
 \end{tabular}
 ~
 \begin{tabular}{ccccccc}
  \hline
    1/4& 	  1/8& 	  1/16&	  1/32\\\hline
9.8712349&	9.8697136&	9.8696108&	9.8696048\\--&	3.90$\downarrow$ &	4.08$\downarrow$ 	&4.00$\downarrow$\\
9.8713258&	9.8697073&	9.8696112&	9.8696048\\--&	4.06$\downarrow$ &	3.91$\downarrow$ &	4.00$\downarrow$\\
19.750518&	19.73995&	19.739256&	19.739211\\--&	3.92$\downarrow$& 	3.98$\downarrow$ 	&4.00$\downarrow$\\
39.574996&	39.484880&	39.478789&	39.478443\\--&	3.90$\downarrow$ &	4.12$\downarrow$ &	3.85$\downarrow$\\
39.566074&	39.484259&	39.478828&	39.478440\\--&	3.91$\downarrow$& 	3.83$\downarrow$ &	4.14$\downarrow$\\
49.514075	&49.359361&	49.348848	&49.348067\\--&	3.87$\downarrow$& 	3.78$\downarrow$ &	4.18$\downarrow$\\
49.534035&	49.360921&	49.348748	&49.348074\\--&		3.85$\downarrow$ &	4.15$\downarrow$ &	3.80$\downarrow$\\
79.639538&	79.005160&	78.959969	&78.957032\\--&		3.82$\downarrow$& 	3.95$\downarrow$ &	3.99$\downarrow$
  \\\hline
 \end{tabular}
 \label{PG1}
 \end{table}

 \begin{table}[htbp]\footnotesize
 \caption{Numerical eigenvalues on the L-shape domain obtained by extended linear (left) and quadratic (right) Lagrange finite element methods.}
 \centering
 \begin{tabular}{ccccccc}
  \hline
$h$&      1/4& 	  1/8& 	  1/16&	  1/32&	  1/64\\\hline
$\lambda_{1,h}$&1.4777& 	1.47511& 	1.47508& 	1.47532& 	1.47548\\
$Rate$&--&2.49$\downarrow$& 	-0.12$\downarrow$& 	1.17$\uparrow$& 	2.52$\uparrow$\\
$\lambda_{2,h}$&3.5694& 	3.54292& 	3.53630& 	3.53459& 	3.53417\\
$Rate$&--&2.00$\downarrow$& 	1.97$\downarrow$& 	2.02$\downarrow$& 	2.00$\downarrow$\\
$\lambda_{3,h}$&10.095& 	9.92637& 	9.88380& 	9.87316& 	9.87049\\
$Rate$&--&1.99$\downarrow$& 	2.00$\downarrow$& 	2.00$\downarrow$& 	2.00$\downarrow$\\
$\lambda_{4,h}$&10.132& 	9.93564& 	9.88610& 	9.87374& 	9.87064\\
$Rate$&--&1.99$\downarrow$& 	2.00$\downarrow$& 	2.00$\downarrow$& 	2.00$\downarrow$\\
$\lambda_{5,h}$&11.699& 	11.4677& 	11.4091& 	11.3943& 	11.3907\\
$Rate$&--&1.98$\downarrow$& 	2.00$\downarrow$& 	2.00$\downarrow$& 	2.00$\downarrow$\\
$\lambda_{6,h}$&12.933& 	12.6608& 	12.5933& 	12.5770& 	12.5733\\
$Rate$&--&2.03$\downarrow$& 	2.07$\downarrow$& 	2.11$\downarrow$& 	2.08$\downarrow$\\
$\lambda_{7,h}$&20.758& 	19.9959& 	19.8035& 	19.7552& 	19.7432\\
$Rate$&--&1.99$\downarrow$& 	2.00$\downarrow$& 	2.00$\downarrow$& 	2.00$\downarrow$\\
$\lambda_{8,h}$&22.598& 	21.7148& 	21.4944& 	21.4409& 	21.4282\\
$Rate$&--&2.01$\downarrow$& 	2.05$\downarrow$& 	2.07$\downarrow$& 	2.05$\downarrow$
  \\\hline
 \end{tabular}
 ~
 \begin{tabular}{ccccccc}
  \hline
  1/4 &	  1/8& 	  1/16&	  1/32	\\\hline			
1.47278&	1.474489	&1.475171&	1.475443\\--&		1.41$\uparrow$& 	1.58$\uparrow$ &	2.27$\uparrow$\\
3.53407	&3.534029&	3.5340305	&{ 3.5340312}	\\--&	4.63$\downarrow$ &	1.08$\uparrow$ &	2.32$\uparrow$\\
9.87118	&9.869704&	9.8696106	&{ 9.8696048}	\\--&	3.99$\downarrow$ &	4.00$\downarrow$& 	4.00$\downarrow$\\
9.87201	&9.869756&	9.8696139	&{ 9.8696050}	\\--&	3.98$\downarrow$ 	&3.99$\downarrow$ 	&4.00$\downarrow$\\
11.3926	&11.38967&	11.38949	&11.38947	\\--&	4.03$\downarrow$ &	4.12$\downarrow$ &	4.36$\downarrow$\\
12.5674	&12.56899&	12.57094	&12.57181	\\--&	0.58$\uparrow$ &	1.36$\uparrow$ &	1.71$\uparrow$\\
19.7525	&19.74008&	19.73926	&19.73921	\\--&	3.92$\downarrow$ 	&3.98$\downarrow$ 	&4.00$\downarrow$\\
21.4232	&21.41747&	21.42140	&21.42338	\\--&	-2.74$\downarrow$& 	1.25$\uparrow$ &	1.70$\uparrow$
  \\\hline
 \end{tabular}
 \label{PG2}
 \end{table}

 \begin{table}[htbp]\footnotesize
 \caption{Numerical eigenvalues on the unit cube obtained by extended linear (left) and quadratic (right) Lagrange finite element methods.}
 \centering
 \begin{tabular}{ccccccc}
  \hline
$h$&  1/2&	 1/3&	 1/4&	 1/5&	 1/6\\\hline
$\lambda_{1,h}$&24.6853&	21.6648&	20.7846&	20.3917&	20.1865\\
$Rate$&--&2.33$\downarrow$&	2.12$\downarrow$&	2.11$\downarrow$&	2.07$\downarrow$\\
$\lambda_{2,h}$&24.7298&	21.7094&	20.7907&	20.3935&	20.1886\\
$Rate$&--&2.29$\downarrow$&	2.18$\downarrow$&	2.13$\downarrow$&	2.06$\downarrow$\\
$\lambda_{3,h}$&24.7577&	21.7018&	20.7935&	20.396&	20.1902\\
$Rate$&--&2.32$\downarrow$&	2.16$\downarrow$&	2.12$\downarrow$&	2.06$\downarrow$\\
$\lambda_{4,h}$&39.9657&	33.1761&	31.4867&	30.7537&	30.3871\\
$Rate$&--&2.63$\downarrow$&	2.23$\downarrow$&	2.22$\downarrow$&	2.12$\downarrow$\\
$\lambda_{5,h}$&39.9684&	33.301&	31.5064&	30.7633&	30.396\\
$Rate$&--&2.54$\downarrow$&	2.31$\downarrow$&	2.23$\downarrow$&	2.1$\downarrow$\\
$\lambda_{6,h}$&74.4975&	61.1531&	56.009&	53.5104&	52.2029\\
$Rate$&--&1.87$\downarrow$&	1.99$\downarrow$&	2.11$\downarrow$&	2.07$\downarrow$\\
$\lambda_{7,h}$&74.6583&	61.356&	56.0925&	53.539&	52.2052\\
$Rate$&--&1.84$\downarrow$&	2.01$\downarrow$&	2.13$\downarrow$&	2.10$\downarrow$\\
$\lambda_{8,h}$&74.7468&	61.7022&	56.1204&	53.5613&	52.2208\\
$Rate$&--&1.78$\downarrow$&	2.09$\downarrow$&	2.13$\downarrow$&	2.10$\downarrow$
  \\\hline
 \end{tabular}
 ~
 \begin{tabular}{ccccccc}
  \hline
  1/2&	 1/3&	 1/4&	 1/5&1/6\\\hline
19.879&	19.7648&	19.7468&	19.7421&	19.7406\\--
&4.19$\downarrow$&	4.22$\downarrow$&	4.33$\downarrow$&	4.01$\downarrow$\\
19.8836&	19.7659&	19.7468&	19.7422&	19.7406\\--
&4.16$\downarrow$&	4.37$\downarrow$&	4.17$\downarrow$&	4.2$\downarrow$\\
19.886&	19.7659&	19.7469&	19.7422&	19.7406\\--
&4.20$\downarrow$&	4.33$\downarrow$&	4.23$\downarrow$&	4.20$\downarrow$\\
30.0109&	29.6946&	29.6345&	29.6189&	29.6134\\--
&3.81$\downarrow$&	4.19$\downarrow$&	4.19$\downarrow$&	4.32$\downarrow$\\
30.0139&	29.6966&	29.6347&	29.619&	29.6134\\--
&3.77$\downarrow$&	4.24$\downarrow$&	4.18$\downarrow$&	4.38$\downarrow$\\
50.9636&	49.6862&	49.4548&	49.3926&	49.3697\\--
&3.86$\downarrow$&	4.01$\downarrow$&	3.91$\downarrow$&	3.95$\downarrow$\\
51.0757&	49.6884&	49.4561&	49.3931&	49.3699\\--
&4.01$\downarrow$&	3.99$\downarrow$&	3.92$\downarrow$&	3.97$\downarrow$\\
51.1667&	49.6958&	49.4577&	49.3935&	49.3699\\--
&4.08$\downarrow$&	4.01$\downarrow$&	3.95$\downarrow$&	4.01$\downarrow$
  \\\hline
 \end{tabular}
 \label{PG}
 \end{table}

 \begin{table}[htbp]\footnotesize
 \caption{Numerical eigenvalues on the tetrahedron obtained by extended linear (left) and quadratic (right) Lagrange finite element methods.}
 \centering
 \begin{tabular}{ccccccc}
  \hline
$h$&  1/2&	 1/4&	 1/8&	 1/16\\\hline
$\lambda_{1,h}$&32.3467&	27.6752&	26.46&	26.156\\
$Rate$&--&	1.96$\downarrow$& 	2.00$\downarrow$& 	2.01$\downarrow$\\
$\lambda_{2,h}$&33.079&	27.7695&	26.4806&	26.161\\
$Rate$&--&2.04$\downarrow$& 	2.01$\downarrow$& 	2.01$\downarrow$\\
$\lambda_{3,h}$&33.079&	27.7695&	26.4806&	26.161\\
$Rate$&&2.04$\downarrow$& 	2.01$\downarrow$& 	2.01$\downarrow$\\
$\lambda_{4,h}$&80.8141&	60.8229&	55.2449&	53.8868\\
$Rate$&--&1.89$\downarrow$& 	2.04$\downarrow$& 	2.03$\downarrow$\\
$\lambda_{5,h}$&81.3878&	61.3432&	55.4132&	53.9318\\
$Rate$&--&1.82$\downarrow$& 	2.01$\downarrow$& 	2.02$\downarrow$\\
$\lambda_{6,h}$&81.3878&	61.3432&	55.4132&	53.9318\\
$Rate$&--&1.82$\downarrow$& 	2.01$\downarrow$& 	2.02$\downarrow$\\
$\lambda_{7,h}$&81.5981&	63.3483&	58.5518&	57.3607\\
$Rate$&--&1.95$\downarrow$& 	2.01$\downarrow$& 	2.03$\downarrow$\\
$\lambda_{8,h}$&85.5942&	64.6753&	58.9099&	57.4519\\
$Rate$&--&1.89$\downarrow$& 	1.99$\downarrow$& 	2.02$\downarrow$
  \\\hline
 \end{tabular}
 ~
 \begin{tabular}{ccccccc}
  \hline
  1/4&	 1/8&	 1/16&	 1/32\\\hline
26.532&	26.2471&	26.0698&	26.0557\\--&	1.32$\downarrow$&	3.76$\downarrow$&	3.65$\downarrow$\\
27.7558&	26.2883&	26.0726&	26.0559\\--&	2.87$\downarrow$&	3.8$\downarrow$&	3.69$\downarrow$\\
27.7558&	26.2883&	26.0726&	26.0559\\--&	2.87$\downarrow$&	3.8$\downarrow$&	3.69$\downarrow$\\
70.0000&	54.9355&	53.5601&	53.446\\--&	3.47$\downarrow$&	3.71$\downarrow$&	3.59$\downarrow$\\
70.8989&	54.9545&	53.5859&	53.448\\--&	3.53$\downarrow$&	3.45$\downarrow$&	3.31$\downarrow$\\
74.6667&	54.9545&	53.5859&	53.448\\--&	3.82$\downarrow$&	3.45$\downarrow$&	3.31$\downarrow$\\
76.6727&	58.6841&	57.1064&	56.9740\\--&	3.53$\downarrow$&	3.69$\downarrow$&	3.58$\downarrow$\\
76.6727&	58.8057&	57.1416&	56.9769\\--&	3.43$\downarrow$&	3.47$\downarrow$&	3.34$\downarrow$
  \\\hline
 \end{tabular}
 \label{PG}
 \end{table}

 \begin{table}\footnotesize
 \caption{Numerical eigenvalues on the thick L-shape domain obtained by extended linear (left) and quadratic (right) Lagrange finite element methods.}
 \centering
 \begin{tabular}{ccccccc}
  \hline
$h$&  1/2&	 1/3&	 1/4&	 1/5&	 1/6\\\hline
$\lambda_{1,h}$&11.091&	10.341&	10.0498&	9.9144&	9.8393\\$Rate$&--&	1.79$\downarrow$&	1.87$\downarrow$&	1.80$\downarrow$&	1.75$\downarrow$\\
$\lambda_{2,h}$&14.0532&	12.5497&	12.0375&	11.7989&	11.6686\\$Rate$&--&	2.00$\downarrow$&	1.93$\downarrow$&	1.89$\downarrow$&	1.86$\downarrow$\\
$\lambda_{3,h}$&15.3133&	14.2097&	13.8451&	13.6809&	13.5949\\$Rate$&--&	2.13$\downarrow$&	2.09$\downarrow$&	2.08$\downarrow$&	2.04$\downarrow$\\
$\lambda_{4,h}$&17.6559&	16.2547&	15.7792&	15.5624&	15.4465\\$Rate$&--&	2.08$\downarrow$&	2.08$\downarrow$&	2.09$\downarrow$&	2.09$\downarrow$\\
$\lambda_{5,h}$&23.9564&	21.6577&	20.7545&	20.3179&	20.0812\\$Rate$&--&	1.79$\downarrow$&	1.9$\downarrow$&	1.93$\downarrow$&	1.9$\downarrow$\\
$\lambda_{6,h}$&24.046&	21.6728&	20.7908&	20.3998&	20.1912\\$Rate$&--&	1.98$\downarrow$&	2.12$\downarrow$&	2.08$\downarrow$&	2.08$\downarrow$\\
$\lambda_{7,h}$&24.4227&	21.7218&	20.7969&	20.4008&	20.1916\\$Rate$&--&	2.12$\downarrow$&	2.18$\downarrow$&	2.10$\downarrow$&	2.08$\downarrow$\\
$\lambda_{8,h}$&24.7486&	21.7637&	20.8081&	20.4013&	20.1924\\$Rate$&--&	2.23$\downarrow$&	2.22$\downarrow$&	2.15$\downarrow$&	2.08$\downarrow$
  \\\hline
 \end{tabular}
 ~
 \begin{tabular}{ccccccc}
  \hline
  1/2&	 1/3&	 1/4&	 1/5&1/6\\\hline
9.7357&	9.6882&	9.6717&	9.6632&	9.658\\--&1.68$\downarrow$&	1.45$\downarrow$&	1.38$\downarrow$&	1.37$\downarrow$\\
11.4258&	11.3765&	11.3635&	11.3579&	11.3548\\--&	2.33$\downarrow$&	1.87$\downarrow$&	1.64$\downarrow$&	1.54$\downarrow$\\
13.4498&	13.4121&	13.4063&	13.4048&	13.4042\\--&	4.18$\downarrow$&	4.02$\downarrow$&	3.71$\downarrow$&	3.97$\downarrow$\\
15.2731&	15.2126&	15.2025&	15.1996&	15.1985\\--&	3.94$\downarrow$&	3.73$\downarrow$&	3.6$\downarrow$&	3.47$\downarrow$\\
19.7751&	19.6082&	19.5667&	19.5492&	19.5393\\--&	2.44$\downarrow$&	1.89$\downarrow$&	1.63$\downarrow$&	1.57$\downarrow$\\
19.8665&	19.7619&	19.7461&	19.7419&	19.7405\\--&	4.25$\downarrow$&	4.14$\downarrow$&	4.21$\downarrow$&	4.03$\downarrow$\\
19.869&	19.7626&	19.7461&	19.7419&	19.7405\\--&	4.23$\downarrow$&	4.25$\downarrow$&	4.21$\downarrow$&	4.03$\downarrow$\\
19.8817&	19.7631&	19.7463&	19.7421&	19.7405\\--&	4.40$\downarrow$&	4.22$\downarrow$&	4.02$\downarrow$&	4.42$\downarrow$
  \\\hline
 \end{tabular}
 \label{PG}
 \end{table}

 \begin{table}\footnotesize
 \caption{Recovered discrete eigenvalues on the square.}
 \centering
 \begin{tabular}{ccccccc}
  \hline
$h$&   1/4 &	  1/8 &	  1/16&	  1/32&	  1/64\\\hline
$\tilde\lambda_{1,h}$&9.79974& 	9.86602& 	9.86933& 	9.86958& 	9.86960\\$Rate$&--&4.28$\uparrow$& 	3.71$\uparrow$& 	3.56$\uparrow$& 	3.42$\uparrow$\\
$\tilde\lambda_{2,h}$&19.24442&	19.70742&	19.73691&	19.73903&	19.73919\\$Rate$&--&3.96$\uparrow$& 	3.79$\uparrow$& 	3.69$\uparrow$ &	3.56$\uparrow$\\
$\tilde\lambda_{3,h}$&35.99717&	39.21930&	39.46352&	39.47736&	39.47834\\$Rate$&--&3.75$\uparrow$ &	4.12$\uparrow$& 	3.82$\uparrow$& 	3.79$\uparrow$
  \\\hline
 \end{tabular}
 \end{table}
 \begin{table}\footnotesize
 \caption{Recovered discrete eigenvalues on the   L-shape domain.}
 \centering
 ~
 \begin{tabular}{ccccccc}
  \hline
$h$& 	  1/4& 	  1/8& 	  1/16&	  1/32&	  1/64\\\hline
$\tilde\lambda_{2,h}$&3.532018&	3.533785&	3.534006&	3.534028&	3.534031\\$Rate$	&--&3.03$\uparrow$& 	3.25$\uparrow$& 	3.10$\uparrow$& 	2.96$\uparrow$\\
$\tilde\lambda_{3,h}$&9.82203&	9.86564&	9.86929&	9.86958&	9.86960\\$Rate$&--&3.59$\uparrow$& 	3.64$\uparrow$& 	3.54$\uparrow$& 	3.40$\uparrow$\\
$\tilde\lambda_{4,h}$&11.29961&	11.38192&	11.38890&	11.38943&	11.38948\\$Rate$&--&3.57$\uparrow$& 	3.70$\uparrow$& 	3.64$\uparrow$& 	3.51$\uparrow$
  \\\hline
 \end{tabular}
 \label{PG}
 \end{table}

 \begin{table}\footnotesize
 \caption{Recovered discrete eigenvalues on the cube (left) and the thick L-shape domain (right).}
 \centering
 \begin{tabular}{ccccccc}
  \hline
$h$&   	   1/4& 	  1/8& 	  1/16&	  1/32&	  1/64\\\hline
$\tilde\lambda_{1,h}$&9.79974& 	9.86602& 	9.86933& 	9.86958& 	9.86960\\$Rate$&--&	4.28$\uparrow$& 	3.71$\uparrow$& 	3.56$\uparrow$& 	3.42$\uparrow$\\
$\tilde\lambda_{2,h}$&19.24442&	19.70742&	19.73691&	19.73903&	19.73919\\$Rate$&--&3.96$\uparrow$& 	3.79$\uparrow$& 	3.69$\uparrow$& 	3.56$\uparrow$\\
$\tilde\lambda_{3,h}$&35.99716&	39.21929&	39.46352&	39.47736&	39.47834\\$Rate$&--&3.75$\uparrow$& 	4.12$\uparrow$& 	3.82$\uparrow$& 	3.79$\uparrow$
  \\\hline
 \end{tabular}
 ~
 \begin{tabular}{ccccccc}
  \hline
 	 1/3&	 1/4&	 1/5&1/6\\\hline
9.345&	9.5008&	9.5555&	9.5826\\	--&2.61$\uparrow$& 	2.24$\uparrow$& 	2.13$\uparrow$\\
11.0998&	11.2155&	11.2574&	11.2809\\--&	2.22$\uparrow$& 	1.75$\uparrow$& 	1.71$\uparrow$\\
13.1075&	13.2933&	13.3529&	13.3767\\--&	3.43$\uparrow$& 	3.48$\uparrow$& 	3.47$\uparrow$
  \\\hline
 \end{tabular}
 \label{PG}
 \end{table}

\begin{figure}[htbp]\label{mesh}
\centering
\includegraphics[width=0.3\textwidth,trim=50 25 25 50]{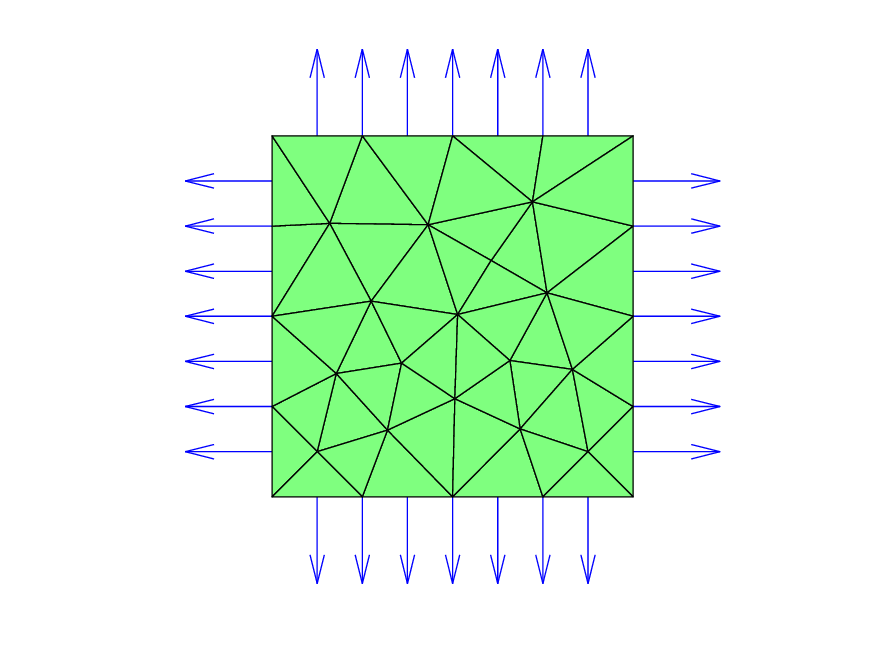}
\includegraphics[width=0.4\textwidth]{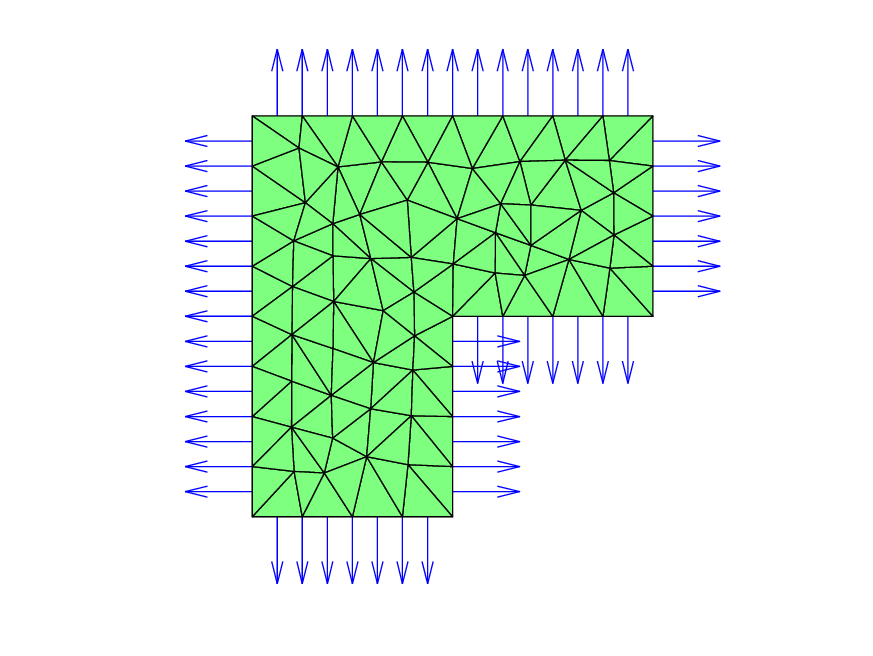}\\
\includegraphics[width=0.35\textwidth]{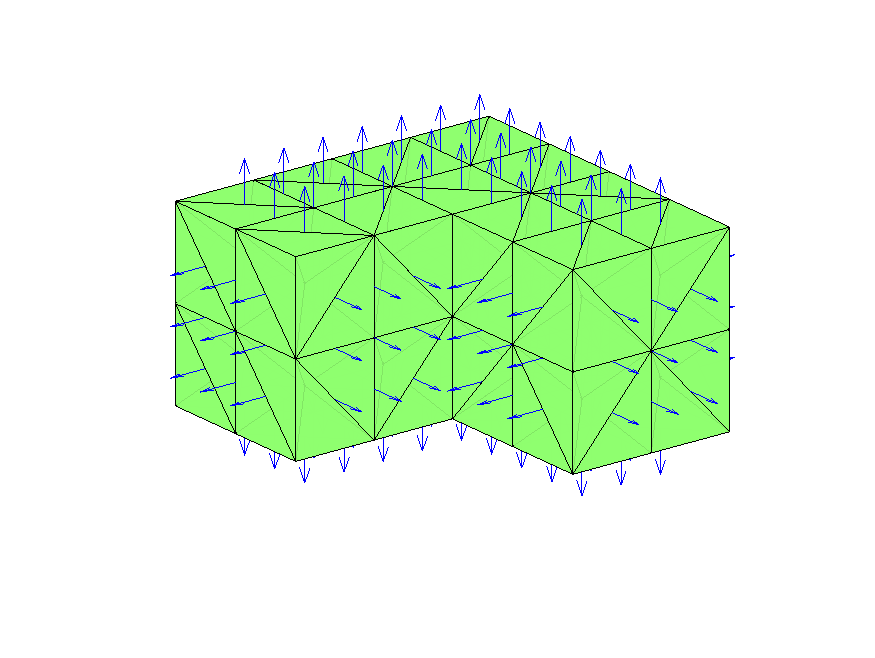}
\includegraphics[width=0.35\textwidth,trim=35 35 50 50,clip]{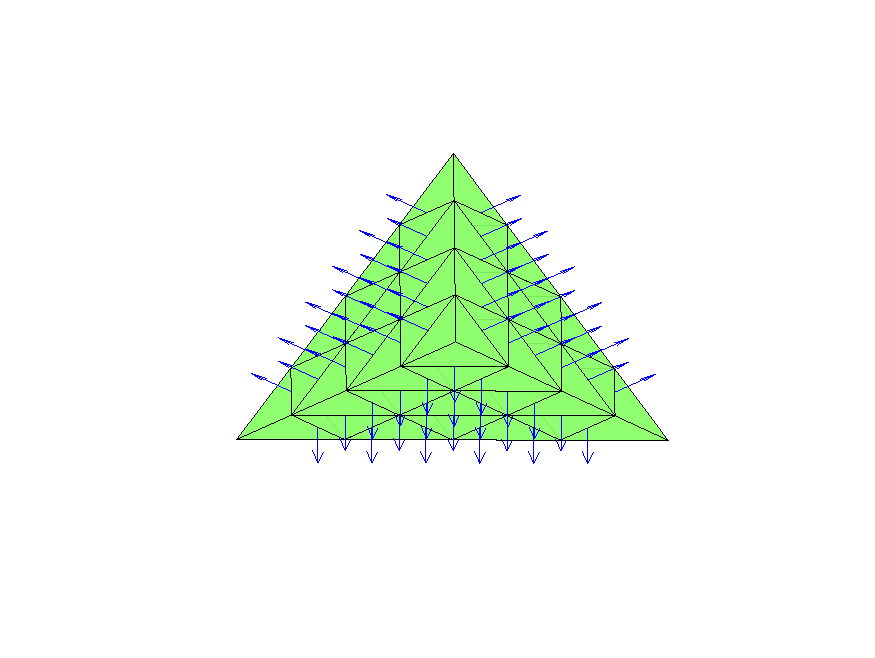}
\caption{Coarse meshes and degrees of freedom on $\partial\Omega$   for extended quadratic Lagrange elements in 2D and 3D.}
\end{figure}




\SG{ \section{Conclusion}
In this paper, we proposed a new method using Lagrange element space to solve
the  Maxwell equations and the associated eigenvalue problem. 
To obtain numerical solution of supercloseness, we propose an average type curl recovery scheme for the extended linear lagrange FEM. This scheme can be used to obtain the numerical lower bound of Maxwell eigenvalues.
Although we only focus on the Maxwell equation and its eigenvalue problem with constant coefficients,  the new method and the associated theoretical results are suitable for  discontinuous coefficients.
}

 \appendix


\bibliographystyle{unsrt}
\bibliography{literature}
\end{document}